\def\gG{\mathbb G}
\def\fF{\mathbb F}
\def\sm{\mathrm{sm}}
\newtheorem{thm}{Theorem} % [section]
\newtheorem*{thm*}{Theorem}
\newtheorem*{prop*}{Proposition}
\newtheorem{cor}[thm]{Corollary}
\newtheorem*{cor*}{Corollary}
\newtheorem{lem}[thm]{Lemma}
\newtheorem*{lem*}{Lemma}
\newtheorem*{claim*}{Claim}
\newtheorem{prop}[thm]{Proposition}
\newtheorem{defn}[thm]{Definition}%number together with conj
\theoremstyle{remark}
\newtheorem*{n&c}{Notations and conventions}
\newtheorem{rem}[thm]{Remark}
\newtheorem*{rem*}{Remark}
\newtheorem{crit-rem}[thm]{Critical remark}
\newtheorem{example}[thm]{Example}
\newtheorem*{example*}{Example}
\newtheorem*{defn*}{Definition}
\def\inv{^{-1}}
\DeclareMathOperator{\del}{\partial}
 \DeclareMathOperator{\Hom}{Hom}
\def\gG{\mathbb G}
\def\refp #1.{(\ref{#1})}
\newcommand\carets [1]{\langle #1 \rangle}
\newcommand{\A}{\mathcal{A}}
\newcommand{\X}{\mathcal{X}}
\newcommand{\Cal}[1]{\mathcal #1}
\newcommand{\ul}[1]{\underline {#1}}
\def\sbr #1.{^{[#1]}}
\def\sfl #1.{^{\lfloor #1\rfloor}}
\def\what{\widehat}
\def\inv{^{-1}}
\def\?{{\bf{??}}}
\def\A{\Bbb A}
\def\X{\Cal X}
\def\C{\mathbb C}
\def\P{\mathbb P}
\def\ord{\text{\rm ord}}
\def\O{\mathcal O}
\def\Sym{\textrm{Sym}}
\def\g{\mathfrak g}
\def\m{\mathfrak m}
\def\1/2{\frac{1}{2}}
\def\d{\mathrm{d}}
\def\im{\text{im}}
\def\2{{[2]}}
\def\l{\ell}
\def\nl{\newline}
\def\<{\langle}
\def\>{\rangle}
\def\im{\text{im}}
\def\2{{[2]}}
\def\l{\ell}
\def\scl #1.{^{\lceil#1\rceil}}
\def\spr #1.{^{(#1)}}
\def\sbc #1.{^{\{#1\}}}
\def\subpr#1.{_{(#1)}}
\def\beq{\begin{equation*}}
\def\eeq{\end{equation*}}
\def\g3{{\Gamma\spr 3.}}
\newcommand{\eqspl}[2]{
%\ss{\bf{label:#1}}\nl
\begin{equation}\label{#1}
\begin{split}
%\ul{\bf{label: #1}}\\
#2\end{split}\end{equation}}
\newcommand{\beginalphaenum}{
\begin{enumerate}\renewcommand{\labelenumi}{ }
\item \begin{enumerate}
}
\def\eex{\end{rm}\end{example}}
\begin{document} 
%\end{document}
%\title{Differential complexes, degeneracy\\ and Hodge theory on Poisson manifolds}
\title{Gaussian scrolls, Gaussian flags and duality}
%and a conjecrture of Griffiths and Harris}
\author %{author}
{Ziv Ran}
%\end{document}
%\Large

%\thanks{\raggedright{

%Partially supported by NSA Grant MDA904-02-1-0094} }
\thanks{arxiv.org 1909.03307; Rendiconti Circolo Mat. Palermo}
\date {\today}% \enddate

%\affil University of California, Riverside\endaffil

\address {\nl UCR Math Dept. \nl
Skye Surge Facility, Aberdeen-Inverness Road
\nl
Riverside CA 92521 US\nl 
ziv.ran @  ucr.edu\nl
%\url{http://math.ucr.edu/~ziv/}
}

%\email {ziv.ran @ucr.edu}
 \subjclass[2010]{14n05}
\keywords{Gauss map, scrolls, duality}

\begin{abstract}
A projective variety whose Gauss map has positive dimensional fibres corresponds to
a special kind of scroll called \emph{Gaussian}. A Gaussian scroll is a member of a canonical
derived \emph{ Gaussian flag}. 
	We introduce a duality in the class of Gaussian  scrolls and flags
	and study its consequences. 
	In particular, a Gaussian scroll is dual to the derived or tangent developable
	scroll of
	a Gaussian scroll  in the dual projective space, and is the 'leading edge' or antiderived
	scroll of its derived stationary scroll.\par
	\end{abstract}

\maketitle
\begin{comment}
%\includegraphics{gauss-rendiconti-response.pdf}
\begin{tt}
\small
\section*{Response to Referee}

1. Added short discussion of Pl\"ucker embedding and coordinates( 4.1)\par
2. Added subsection on second fundamental form as derivative of Gauss map\par
3. See Prop. 8 for statement and proof that fibres of Gauss map are linear\par
4. Rewrote the proof of main duality theorem emphasizing role of 
generic smoothness (submersiveness) of Gauss map\par
5. Added some classical references and definition of focal locus in sec. 6\par
6. Generally, sharpened definitions focusing on Gaussian scrolls, i.e.
stationary scrolls with maximal fibres.
%\section*{Compliance with ethical standards}
%1. The author does not now have and has never before had
% affiliation with any organization with a direct or indirect financial
%interest in the subject matter discussed in the manuscript. There are no potential conflicts of interest
%2. No human participants or animals were used.
%3. No funding was received for conducting this study.
%4. The author has never had and does not have any interests directly or indirectly related to this paper.
%\section*{DATA AVAILABILITY STATEMENT}
%There is no data set associated with this paper
\normalsize
\end{tt}
\end{comment}
\section{Introduction: Gauss-deficient varieties}
\subsection{Setup: degenerate Gauss maps as stationary scrolls}
Let $X$ be an irreducible closed
$n$-dimensional  subvariety of $\P^N$ 
(always over $\C$). In what follows we shall assume-
without loss of interesting generality- that $X$
is nondegenerate and not a cone
The Gauss map \[\gamma_X:X_\sm\to\gG=\gG(n,N)\]  
is the morphism (rational map on $X$)  that  maps
a smooth point $x\in X$ to the embedded tangent space 
$\tilde T_xX$, which is an $n$-plane in $\P^N$. 
By a well-known theorem of Zak \cite{zak},
this map is finite whenever $X$ is smooth. However there exist many
examples of singular varieties $X$ for which $\gamma_X$ is  not
generically finite to its image. These include cones, joins,  tangent developables and more, 
see \cite{griffiths-harris-ldg}, \cite{akivis-gauss}, 
\cite{akivis-goldberg}, \cite{ran-gauss}
and below. In this case  $X$ is said
to have a \emph{degenerate} Gauss
mapping and $g=\dim(\gamma_X(X))$ is called the \emph{Gauss dimension} of
$X$ while the difference $\dim(X)-g$ is
called the \emph{Gauss deficiency} of $X$ and $X$ is said to be \emph{Gauss-deficient}
if its Gauss deficiency is $>0$. In fact, the fibres of $\gamma_X$
are known (see op. cit. or below) to be 
(open subsets of) linear spaces.
Accordingly, a Gauss-deficient variety
determines a particular kind
of (singular) scroll which we will call \emph{stationary scroll}, where
stationary means that the Gauss map is constant on rulings.
The stationary scrolls arising from Gauss maps are called \emph{Gaussian}.\par
The main purpose of this paper is to introduce a scroll-centric viewpoint on Gauss-deficient varieties.
%\par To give an idea about our main results,
%the following slightly informal statement is a consequence. It says that the operation of taking tangent developable
%then dualizing is a self-duality in the class of stationary scrolls, so in particular every
%stationary scroll is dial to a tangent developable
%(see below for elaborations including on the essentially self-explanatory notion of 'linearly dual'):
%\begin{thm*}[Informal]
%Every (nondegenerate, non-cone)
%stationary scroll is the linear dual of the tangent developable of the linear dual 
%of its tangent developable.
%\end{thm*}
Namely, a Gauss-deficient variety determines, besides its Gaussian scroll,
a canonical  maximal \emph{Gaussian flag}
of Gaussian scrolls, in which every member except the first is the tangent developable of the preceding,
and every member except the last is the 'leading edge' of the succeeding; moreover, there is a duality
in the class of Gaussian flags, with 'developable' dual to 'leading edge' (see Theorem \ref{duality-thm}).
We elaborate below.
 \par
 \subsection{Known results}
In \cite{griffiths-harris-ldg}, 
Griffiths and Harris state a structure theorem for Gauss-deficient varieties. 
They assert somewhat vaguely that such varieties
are 'built up from' (actually foliated by) cones and developable varieties. 
(\cite{griffiths-harris-ldg}, p. 392). 
 In the case $n = 2$ (more generally, $g=1$) they show
 that any stationary scroll is in fact a cone or a developable.
 Varieties with Gauss dimension $g=2$ were classified by Piontkowski
 \cite{piontkowski}.
In \cite{akivis-gauss}
\footnote{We thank Dr. L. Song for bringing the paper \cite{akivis-gauss}
	to our attention.}, Akivis, Goldberg and Landsberg
 present  some examples, especially "unions of conjugate spaces",
  which are neither cones nor developables. 
Subsequently Akivis and Goldberg \cite{akivis-goldberg-2000}
refined the Griffiths-Harris analysis to show that stationary scrolls
  are actually built up from, i.e. foliated by, certain 3 basic types.
  Further results are due to E. Mezzetti and O. Tommasi
  \cite{mezzetti2002} and \cite{mezzetti2004}.
  See also \cite{zak}, \cite{landsberg-lectures} and \cite{akivis-goldberg} for
  comprehensive accounts of results on Gauss maps and various types
  of degeneracy. 
  Still, the question of construction and classification
of all stationary scrolls appears to be an open problem.
\subsection{Contents of the paper}
In \S\ref{stationary-sec} we define some notions related to the sort of scrolls
that appear in the structural study of Gauss maps.  This allows us to give an informal 
statement of our main duality result (see \S \ref{duality-preview-sec}). 
In \S \ref{construction} and \S \ref{properties-sec} 
we review and slightly extend
some known constructions and properties for Gauss-deficient 
varieties and some related notions such as second fundamental form
(derivative of the Gauss map). In \S \ref{duality} we state and prove 7our main result,
establishing a duality for Gaussian flags. Finally in \S \ref{small} we give some results
on varieties with small Gauss dimension.
\subsubsection*{Acknowledgment}
We thank Professor E. Mezzetti and Professor O. Tommasi for helpful comments
and references.
\section{Gaussian scrolls and flags}\label{stationary-sec}
\subsection{Stationary and Gaussian scrolls: definitions}\label{stationary-gaussian} 
Considering that stationary scrolls are necessarily singular,
it will be convenient to adopt a slightly more general viewpoint
and talk about \emph{parametric scrolls}: 
\begin{defn}
A parametric
scroll consists of a \par
(a) $\P^k$-bundle $X/B$; \par
(b)  a generically finite
morphism $f:X\to\P^N$, called the spreading map, that restricts to a linear isomorphism
of a general fibre of $X/B$ with a linear $\P^k$ in $\P^N$.
\end{defn} 
We allow the trivial case $k=0$.\par
For $x\in X$ we denote by $\tilde T_xX$ the unique linear subspace of $\P^N$
containing $f(x)$ which has tangent space $df_x(T_xX)$.
This is called the projective tangent space.\par
To a parametric scroll
is associated its  \emph{classifying morphism}
\[\phi: B\to\gG(k,N)\]
 (a priori the classifying map is just
a rational map but as our viewpoint is essentially local over $B$ 
no information is lost by base-changing it to a morphism).\par 
Note that $\phi$ is generically finite to its image
hence, by char. 0, generically unramified.
Next we define stationary scrolls:
\begin{defn}\label{stationary-defn}
A parametric scroll  $(X/B, f)$ 
is \emph{stationary} if either one of the following two equivalent conditions hold:\par
(a) 
for general $b\in B$ with fibre $X_b$ and general $x\in X_b$,  the projective tangent space $\tilde T_xX$, 
depends only on $b$;\par
(b) the Gauss (rational) map
\[\gamma_X:X\cdots\to\gG(n, N),\]
\[x\mapsto \tilde T_xX\] factors through a map
\eqspl{gammabar}{\bar\gamma_X:B\to\gG(n,N).}
.\end{defn} 
\begin{rem}(i)
All parametric scrolls with $k=0$ are stationary;
the stationary scrolls with $k>0$ yield
varieties with degenerate Gauss map.\par
(ii) the map $\bar\gamma_X$ need not be generically finite to its image even if the classifying map
is (Remark \ref{nonstrict}).
\end{rem}
For a stationary scroll $(X_B, f)$ as above,  
the image $\bar X=f(X)\subset\P^N$ of the spreading map is a variety with the
property that the fibre through a general point of its Gauss map $\gamma_{\bar X}$, 
which itself a $\P^m$, 
contains the ruling $f(\P^k_b)$, for all $b$ in the corresponding fibre of $\bar\gamma_X$.
\begin{defn}
The stationary scroll $(X_B, f)$ is said to be \emph{Gaussian} if $k=m$, i.e. if $X\to B$ is,
 up to generically finite base-change, the Gauss map of a variety $\bar X\subset\P^N$.
 \end{defn}
 Note that given a stationary scroll $(X_B, f)$, there is an associated Gaussian scroll
 $(X^+_{B^+,} f^+)$, namely a suitable resolution of the Gauss map of $f(X)$.
 This is called the \emph{saturation} of $(X_B, f)$ and $(X_B, f)$ is said to be \emph{saturated}
 if $X^+_{B^+}=X_B$ (in particular, $B^+=B$).  Note the diagram
 \eqspl{saturation}{
 \begin{matrix}
 X_B&\to& X^+_{B^+}\\
 \downarrow&&\downarrow\\
 B&\to&B^+
 \end{matrix}
 }
 with $X_B\to X^+_{B^+}$ injective and $B\to B^+$ surjective and $f(X)=f^+(X^+)$.\par
 For example if $X_B$ is the trivial scroll (fibre dimension 0, $B=X$), its saturation is
 the Gauss map $\gamma_X:X\to\gamma_X(X)$, viewed as a scroll.

%/***********
%denote by $B'$ or $B^{1)}$ the image of  $\bar\gamma_X$.
%Working at a general point of $B$, we may assume $\bar\gamma_X$
%is a morphism with $B'$ smooth. Using characteristic 0 we may then assume  that the differential
%\[d\bar\gamma_X:T_bB\to T_{\bar\gamma(b)}B'\]
%is surjective.\par
%*****/
\subsection{Derived scroll; Gaussian flags}\label{gaussian-flags-sec}
\begin{defn}
Given a stationary scroll $X_B$ as above, its  tangential or derived scroll
$X'_B$ or $X_B^{(1)}$  is the scroll corresponding to the map $\bar\gamma_X$ above \eqref{gammabar}.
\end{defn}
Now given a stationary scroll $X_B$, a general point of $X'_B$ has the form  $q=p+v, v\in T_pX$.
Then $T_qX'_B$ is generated mod $T_pX$ by elements of the form $d^2/dt^2|_{t=0}(\alpha(t))$
where $\alpha(t)$ is an arc in $X$ with $\alpha(0)=p, d/dt|_{t=0}(\alpha(t)=v$. Modulo $T_pX$, this is
independent of $q\in T_pX$. Thus, the derived scroll
%It is well known (compare \cite{griffiths-harris-ldg}, 2.19) 
 $X^{(1)}_{B^{(1)}}$ is itself a stationary scroll.
In fact, if if $X_B$ is a Gaussian scroll then so is $X'_B$: 
if we denote the fibres of $X_B$ by $\P^k_b$ and tangent spaces by $\P^n_b$, and pick 
$\alpha(t)$ is a parametric arc through a general point $b$ in $B$,
 and we set $\P^k(t)=\P^k_{\alpha(t)}$ then we have
\[(d/dt)|_{t=0}(\P^k(t))\neq 0\mod\P^k(0)\]
and similarly, unless $(d/td)^i|_{t=0}(\P^k(t))=\P^N$, we have
\[(d/dt)^{i+1}_{t=0}(\P^k(t))\neq 0\mod (d/td)^i|_{t=0}(\P^k(t)).\]
Indeed if the $i$th and $(i+1)$st derivatives agree then inductively all the higher
derivatives agree as well- but these ultimately must fill up $\P^N$. This implies that is $X_B$ is Gaussian,
then so is $X'_B$. \par
This allows us to define higher derived scrolls and Gaussian flags:

%By definition, the fibre of $X^{(1)}$ over $b_1=\\bar\gamma(b)

%/**********
%Note that by definition, we have a diagram:
%\eqspl{strict}{
%\begin{matrix}
%X_B&\to&X'_B&\to&X^{(1)}_{B^{(1)}}\\
%\downarrow&\swarrow&&&\downarrow\\
%B&&\stackrel{\gamma_X^+}{\to}&&B^{(1)}
%\end{matrix}
%}
%with $X_B\to X'_B$ and $X'_B\to X^{(1)}_{B^{(1)}}$ injective and $B\to B^{(1)}$ surjective.
%The fibre of $X^{(1)}$ over $b_1=\gamma_X^+(b)\in B^{(1)}$ is the tangent space $T_{f(x)}f(X)$
%for a general point $x\in X_b$. Note that $X^{(1)}/B^{(1)}$ is saturated.\par
%When $X$ is saturated, we have $B^{(1)}=B$ so $X'_B=X^{(1)}_B$ and the map
%$X_B\to X^{(1)}_B$ ecomes an inculsion of a $\P^k$-bundle in a $\P^n$-bundle over $B$,
%$n=\dim(X)$.*
%***************/

%indeed at a general point $p\in X^{(1)}_b\subset X^{(1)}_B$, 
 %tangent space $T_pX^{(1)}_B$ is  just the second-order tangent space
%$T^{(2)}_xX$ (generated by second derivatives of parametric curves) 
%and is independent of $x\in X^{(1)}_b$.
\begin{defn} \label{gaussian-def}(i)
Let $X_B$ be a stationary scroll and let  $i>0$ be such that  $X^{(i)}_B$ is not filling, i.e. does not map onto $\P^N$. Then the $(i+1)$-st
derived scroll is defined as
the derived scroll of $X^{(i)}_B$:
\eqspl{derived}{
X^{(i+1)}_B=(X^{(i)}_B)^{(1)}.
} 
The flag $X_B^{(\bullet)}$ is called the  \emph{osculating flag} of scrolls associated to $X_B$.\par
(ii) A flag of scrolls $X_B^\bullet$ is said to be Gaussian if each $X^i_B$ is a  Gaussian scroll and
$(X_B^i)^{(1)}=X_B^{(i+1)}$
\end{defn}
Thus a nondegenerate Gaussian scroll can be extended upwards to a Gaussian flag terminating
in a filling scroll (whose spreading map is surjective).
Note that the osculating flags (of scrolls) defined above include the (plain) \emph{osculating flags}
where $X/B$ is a trivial scroll (fibre dimension 0), so $X^{(i)}_B$ is just
its ordinary $i$th osculating  scroll of $X$.\par

\begin{rem}\label{nonstrict}
Even if $X_B$ is effective (but not Gaussian), its derived scroll $X'_B$ 
may not be effective. 
Indeed start with any stationary $\P^k$-scroll $Y^1_{D_1}$ 
and in a general $\P^k_{d_1}$ fibre choose a 1-parameter family $\P^{k-1}_{d, d_1}$ so that together
these constitute a $\P^{k-1}$-scroll with 2-dimensional base, say $Y_D$  so that $D$ fibres over $D_1$.
Then we get the base-changed scroll $Y^1_D$
and clearly $T_yY_D=T_yY^1_D$ at a general point so  $Y_D'=Y_D^1$ and $Y^1_D$ 
is not effective even though $Y_D$ is.
%Suppose in each $\P^k_d$ we are given a $\P^{k-1}_d$ and let $Z_t=\bigcup\limits_{d\in C_t}\P^{k-1}_d$.
\end{rem}
\subsection{Duality: a preview}\label{duality-preview-sec}
For a subspace $A\subset\P^N$, we denote by $A^\perp$ the dual subspace of 
the dual projective space $\check\P^N$ (sometimes called 'linear dual' do avoid confusion with dual
variety).
Then our main result here can be stated informally as follows (see Theorem \ref{duality-thm}
for a precise statement).
\begin{thm*}
The operation $A\mapsto A^\perp$ is a duality between  the collections of Gaussian flags
in $\P^N$ and in $\check \P^N$.
\end{thm*}
Of course
the dual of $(X^{(\bullet)}_{B})$ is $(X^{(\bullet)\perp}_B)$ (in reverse order), 
mapping to the dual projective space $\check\P^N$. Here $X^\perp_B$
is the scroll over $B$ with fibres $X_b^\perp$ and likewise for
$X^{(i)\perp}_B$. As the terminology implies, 
the linear duality 
relationship is symmetric. \par
Now given a Gaussian pair
\[X_B\subset X_B^{(1)},\]
its dual
\[(X_B^{(1)})^\perp\subset X_B^\perp,\]
isn't obviously Gaussian but rather 'co-Gaussian', in the sense that
\[(X_B^{(1)})^\perp=(X_B^\perp)^{\perp(1)\perp}.\]
For any stationary scroll $Y_B$, we call 
the subscroll $Y_B^{\perp(1)\perp}$ its \emph{antiderived} or leading edge scroll
and denote it by $Y_B^{(-1)}$ (see \S \ref{antiderived-sec}). Thus, a big part
 of the content of the duality theorem is that a Gaussian scroll is the derived of its antiderived (when
 the latter is nonempty: \[Y_B=(Y_B^{(-1)})^{(1)}.\]
%This symmetry has some interesting
%consequences. In particular, note that the fibre dimension of
%$X^{(1)\perp}_B$ is $N-n-1$ which is zero when $X$ is a hypersurface.
%Consequently, any hypersurface with degenerate Gauss map
%and Gauss dimension $g$ is
%linearly dual to a tangential scroll of base dimension $g$ 
%(and conversely). Thus we get a classification
%of these hypersurfaces. 
%A similar conclusion holds when some osculating scroll
%$X^{(i)}_B$ is a hypersurface: then $X$ is linearly dual 
%to an osculating stationary
%scroll.\par
If $Y_B$ is not a cone, which we generally assume, 
formation of antiderived scroll can be iterated until the empty scroll is reached.
A Gaussian scroll can thus be extended both up and down to a
uniquely determined maximal (in both directions) Gaussian flag, whose top member is
a filling scroll and. 
bottom member  is dual to a filling scroll. This makes 
the given Gaussian scroll $X_B$ itself
an $i$-th osculating scroll, $i\geq 0$,
 to the dual of a filling scroll. Thus, 
 every nondegenerate, non-cone Gaussian
 scroll belongs to a uniquely determined such Gaussian flag.
 In particular, we obtain a simple recipe for all
 Gaussian scrolls, namely:\par (F) start with a \ul {Filling} scroll,\par
 (D)  \ul {Dualize},\par
$(\d^i)$\ \ \ul {derive} $i$ times.  

\section{Some known constructions for stationary scrolls}\label{construction}
This is mainly from \cite{akivis-gauss}), slightly extended.
\subsection{Joins}\label{joins}
Let $Y_0,...,Y_r\subset\P^N$ and set 
\[X=\bigcup\{<p_0,...,p_r>:(p_0,...,p_r)\in Y_0\times...\times Y_r\}\]
This is clearly a Gaussian scroll as $\tilde T_{X, x}$ doesn't change as
$x$ moves generally in a fixed span $<p_0,...,p_r>$. Note that cones
are a special case of joins and joins are foliated by cones.
\subsection{Tangential (developable) and osculating varieties}\label{tangential}
For a variety $Y\subset\P^N$, let  $X$ be the closure of
the union of its embedded tangent spaces at smooth points.
Then $X$  determines a Gaussian scroll
because $\tilde T_{X, x}$ doesn't change as $x$ varies generally
in a fixed $\tilde T_{Y, y}$ and in fact equals the second-order
tangent space $T^{(2)}_{Y, y}$ (see \S \ref{stationary-gaussian}). 
Thus the Gauss deficiency of $X$ is at least
$\dim(Y)$. We can similarly construct Gaussian scrolls as 
$\tilde T^{(k)}_Y$, the
$k$-th osculating variety to $Y$, union of the
$k$-th osculating space $\tilde T^{(k)}_{Y, y}$.
The tangent space to $\tilde T^{(k)}_Y$ at a general point of 
$\tilde T^{(k)}_{Y, y}$ is $\tilde T^{(k+1)}_{Y, y}$.
\subsection{Inflation}\label{inflation}
 This is a slight generalization of 
the 'band' construction in \cite{akivis-gauss}.
Let \[X_0=\bigcup\limits_{b\in B}\P^k_b\]
be a stationary scroll with Gauss deficiency $k$
and suppose for each $b\in B$ we are given an inclusion
$\P^k_b\subset\P^\l_b$ 
i.e. a lifting of the classifying map of $X_0$ to the appropriate flag variety
parametrizing pairs $\P^k\subset\P^\l$ in $\P^N$.
Let 
\[X=\bigcup\limits_{b\in B}\P^\l_b\]
 and assume for simplicity
 $X$ is a scroll, which will be the case for
 general choices provided $\l-k<N-\dim(X_0)$.
$X$ is called an \emph{inflation} of $X_0$.  I claim that $X$
also has degenerate Gauss map, hence has the structure of a
stationary scroll (though not necessarily for the given
map $X\to B$). Indeed consider a general
open $\A^N\subset\P^N$.  
Consider $\l$ general sections $p_1,...,p_\l$ of $X\cap\A^N$ over $B$
such that $p_1,...,p_k\in X_0$.
For $x\in\A^\l_b=\P^\l_b\cap\A^N$, we can write
\[x=\sum\limits_{i=1}^\l \alpha_ip_i, \ \ p_1,...,p_k\in\A^k_b=\P^k_b\cap\A^N\]
where by fixing the $p_i$ and varying the $\alpha_i$, $x$ fills up
$\A^\l_b$ while $\sum\limits_{i=1}^k\alpha_ip_i$ fills up $\A^k_b$.
Then $\tilde T_{X, x}$ is generated $\mod\P^l_b$ by
$\del p_i/\del t_j, i=1,...,\l, j=1,...,\dim(B)$, where the $t_j$ are
coordinates on $B$, and this remains constant if $\alpha_1,...,\alpha_k$
are varied, thanks to $X_0$ being stationary. Note that the Gauss 
deficiency of $X$ is at least equal to that of $X_0$, i.e. $k$. 
$X$ is called an 
\emph{inflation} of $X_0$. In general, the stationary scroll corresponding to $X$
will have base $B'$ of dimension $\geq\dim(B)$ and fibre dimension
$k', k\leq k'\leq \l$. For the 'generic' inflation, we have $k'=k$.
\subsection{Base-change}\label{basechange}If $X/B$ is a stationary scroll
and $B'\to B$ is any morphism then the base-changed scroll $X_{B'}/B'$
is also a stationary scroll. In particular, if $B'\subset B$ is a curve
then $X_{B'}$ is a cone or developable by Griffiths-Harris (reproved below 
in Proposition \ref{1dim}).
Consequently $X$ can be foliated by such.
\subsection{Linearity of the fibres}
	For completeness we give a short proof of the classical result that that the general fibre
	component of a Gauss map is a linear space. 
	\begin{prop}\label{linear0}
	Let $Y\subset \P^N$ be
	an irreducible closed subvariety such that the (rational) Gauss map $\gamma:Y\cdots\to B$
	has poitive-dimensional general fibre. Then a general fibre component of $\gamma$ is
	a linear subspace of $\P^N$.
	\end{prop}
\begin{proof}	Filling up $B$ by curves such as 1-dimensional linear space sections $B_1$,
replacing $Y$ by $\gamma\inv(B_1)$,  and using 
\ref{basechange} above we may assume $B$ is a smooth curve. Let $Y'\to Y$ 
	be the normalization. Omitting a subset of codimension 2 we may assume $Y'$ is 
	smooth and the natural map $\gamma':Y'\to B$ is a nonconstant
	morphism. For $b\in B$
	general $F=(\gamma')\inv(b)$ and $L=\tilde T_yY$ for $y\in F$ general
	($L$ independent of $y$ for fixed $F$). Then setting  \[n=\dim (F),\] we have 
	\[n+1=\dim(Y)=\dim(L)\]
	We have an injection and generic isomorphism of rank-1 sheaves
	\[\O_F=N_{F/Y}\to N_{F/L}.\]
	If the inclusion $F\to L$ does  not go to a linear space (i.e. a hyperplane) then 
	the family of tangent spaces $\{T_zF:z\in F\}$ fills up $L$. But this means that a general
	$v\in L$ is actually tangent to $F$ somewhere so $\gamma'$ is infinitesimally constant in
	the direction $v$, hence $\gamma'$ is constant, which is a contradiction.
	\par
%	/************the family of 'apparent countours'
%	 $F_v=\{y\in F:v\in T_yY\}$ for $v\in L$ sweeps out $F$.
%	But this is absurd because for $v\in L$ general and $y\in F_v$ general,
%	$Y'\to L$ must be ramified at $y$.*******/*
	\end{proof}

\section{Properties of Gauss maps and their fibres: a review}\label{properties-sec}
\subsection{Gauss maps and their fibres}
 In this section we review some
basic and well-known facts on Gauss maps.
Till further notice- which will come- $X$ denotes a closed
subvariety of $\P^N$ and $\gamma_X:X_{\mathrm{sm}}\to\gG(n, N)$ is its
Gauss map. With no loss or generality $X$ may be assumed nondegenerate
and 'dually', not a cone.
Note that a global vector field $\delta$ on $\P^N$ is induced 
by the action of the general linear group, which extends to
an action on the Grassmannian $\gG$ compatible with the
Pl\"ucker emebedding $\gG\to\P^M$. Thus we may view $\delta$
as a vector field on $\P^n\times\gG$ that extends to a vector
field on $\P^N\times\P^M$.\par
The Pl\"ucker line bundle on $\gG$ is by definition the pullback of $\O_{\P^M}(1)$
by the Pl\"ucker embedding, i.e. in the linear Grassmannian version $\gG(n,N)=G(n+1, N+1)$,
\[(A\subset\C^{N+1})\mapsto (\wedge^{n+1}A\subset\wedge^{n+1}\C^N).\]
A Pl\"ucker coordinate on $\gG$ is the restriction
of a linear (local) coordinate on $\P^M$.\par
Now let $F\subset X_\sm$ be a positive-dimensional component of 
the fibre of $\gamma_X$ over $[A]\in\gG$,
$A$ being an $n$-dimensional linear subspace in $\P^N$. If
$a\in A$ then $a$ extends (non-uniquely) to a global
vector field  $\delta_a$ on $\P^N$, thought of as a derivation,
which is everywhere tangent to $X$ along $F$. At the same time
$\delta$ also determines a tangent vector to the Gauss image
$\gamma_X(X_\sm)$ at $[A]$. We can trivialize the Pl\"ucker
line bundle $L$ in a neighborhood of $[A]$, hence
trivialize $\gamma^*(L)$ in a neighborhood of $F$.
I claim next that $F$ is not a multiple fibre of $\gamma_X$; 
equivalently, for a general Pl\"ucker coordinate $\phi$ vanishing
at $[A]$, $\gamma_X^*(\phi)$ vanishes to order 1 along $F$.
Indeed let 
\[\mu=\min\{\ord_F(\gamma_X^*(\phi)):\phi([A])=0\}\]
(minimum among Pl\"ucker coordinates). If $\mu>1$ then for
general $a\in A$, 
\[\ord_F(\delta_a(\gamma_X^*(\phi)))=
\ord_F(\gamma_X^*(\delta_a(\phi)))=\ord_F(\phi)-1.\]
However $\delta_a(\phi)$ is also a Pl\"ucker coordinate,
so this is a contradiction.\par
As we have seen in Proposition \ref{linear0} below and has been well known at least since \cite{griffiths-harris-ldg},
a general fibre $F$ of $\gamma_X$ is an open subset of a linear space
(see also \cite{landsberg-lectures}, \S 5 or   for another proof). 
Now I claim that for an \emph{arbitrary}
fibre $F$, not contained in the singular locus of $X$,
 the closure of $F$ must
meet the singular locus in codimension 1. Suppose not.
Then $F$ would contain a complete positive-dimensional subvariety $C$ 
disjoint from the singular locus
and the tangent sheaf to $X$ is globally generated
in a neighborhood of $C$. Integrating suitable
vector fields, we get a family of translates of $C$ filling
up an analytic neighborhood of $C$ in $X$, 
and each of these translates must be contained in some
fibre of $\gamma_X$. It follows that a general fibre of $\gamma_X$
must contain a translate of $C$ so we may as well assume $F$ is general.
Now if $F=\gamma_X\inv([L])$ then $N_{F/X}|_C\simeq N_{F/L}|_C$
because $X$ is everywhere tangent to $L$ along $F$.
But the former bundle is trivial, $F$ being a 
general fibre, while the latter bundle is ample in that $N_{F/L}(-1)$
is globally generated, which is a contradiction if $C$ is complete.
We have thus proven the following slight refinement of Proposition \ref{linear0}:
\begin{prop}
	A general fibre of $\gamma_X$ is an open subset in a linear space
	and the closure of an arbitrary fibre meets the singular locus of $X$ 
	in codimension 1.
	\end{prop}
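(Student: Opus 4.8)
The plan is to prove the two assertions of Proposition~\ref{gauss-fibre-prop} separately, since the preceding discussion in~\S1.2 has essentially assembled all the needed ingredients; what remains is to organize them into a clean argument and fill a couple of gaps.

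\textbf{First assertion: a general fibre is linear.} I would follow the classical Griffiths--Harris argument, which the text itself promises to supply in Remark~\ref{linear0}. Let $F$ be a general fibre component of $\gamma_X$ over $[A]\in\gG(n,N)$, with $A$ the common tangent $n$-plane. The key point is that for each $a\in A$ the globally defined derivation $\delta_a$ on $\P^N$ is everywhere tangent to $X$ along $F$, so its integral curves sweep out deformations of $F$ inside $X$ whose tangent direction at any point $x\in F$ lies in $A=\tilde T_xX$. Differentiating the condition ``$\gamma_X$ is constant along $F$'' — equivalently, using that a general Pl\"ucker coordinate $\phi$ vanishing at $[A]$ has $\gamma_X^*(\phi)$ vanishing to order exactly $1$ along $F$, as established just above — one shows that the second fundamental form of $X$ vanishes in the directions tangent to $F$. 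Concretely, choose an affine chart, write $F$ locally as the zero set of coordinate functions, and observe that the Gauss map being constant forces the restriction of the embedding to the linear span of $F$ to be affine-linear; hence $F$ is an open subset of its linear span. I expect this to be a short, standard computation once the order-one vanishing is in hand.

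\textbf{Second assertion: an arbitrary fibre meets $X_{\sing}$ in codimension one.} Here I would argue by contradiction exactly along the lines sketched in the excerpt, making each step precise. Suppose $\overline F$ avoids $X_{\sing}$ in codimension $\geq 2$ for some fibre component $F$ over $[L]$. Then inside $F$ one can find a complete positive-dimensional subvariety $C$ disjoint from $X_{\sing}$ (e.g. a general complete intersection curve in $\overline F$, which by the codimension hypothesis can be taken to miss the singular locus). On a neighborhood of $C$ the tangent sheaf $T_X$ is locally free and, being generated by the restrictions of the $\delta_a$, globally generated there; integrating these vector fields produces an analytic family of translates of $C$ filling a neighborhood of $C$ in $X$. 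Each translate stays in a single fibre of $\gamma_X$ because the $\delta_a$ move points within the common tangent plane, so a general fibre of $\gamma_X$ also contains a translate of $C$; replacing $F$ and $C$ accordingly we may assume $F$ is a general fibre. Now invoke the first assertion: $F$ is open in a linear space $\P^k$, and since $X$ is everywhere tangent to $L\supset F$ along $F$ one has $N_{F/X}|_C\simeq N_{F/L}|_C$. The left side is trivial because deforming a general fibre of $\gamma_X$ within $X$ just moves it to another fibre, so its normal bundle has no ``curvature'' along $C$ — more carefully, the family of fibres gives a map to the base of $\gamma_X$ and $N_{F/X}$ is pulled back from there, hence trivial on the complete curve $C$. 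The right side is ample: $N_{F/L}(-1)$ is globally generated (the normal bundle of a linear subspace in a larger linear space is $\O(1)^{\oplus(\dim L-k)}$, restricting to $C$ as a sum of copies of $\O_C(1)$), so $N_{F/L}|_C$ has positive degree on $C$. A trivial bundle cannot be isomorphic to an ample one on a complete curve — contradiction.

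\textbf{Main obstacle.} The delicate point is the claimed triviality of $N_{F/X}|_C$ for a general fibre $F$. One must be careful that ``general fibre'' and the linear-space structure interact correctly: the universal family of Gauss fibres over (a dense open of) the image $\gamma_X(X)$ realizes a neighborhood of $F$ in $X$ as a $\P^k$-bundle-like object, so $N_{F/X}$ is the pullback of the tangent space of the image at $[L]$, a trivial bundle on $F$ and a fortiori on $C$. I would make this rigorous by working over the open locus where $\gamma_X$ is a smooth morphism onto its image (after the resolution implicit in the paper's conventions), where the statement $N_{F/X}\cong \O_F^{\oplus g}$ is immediate. The rest of the argument is bookkeeping: ensuring $C$ can be chosen complete and disjoint from $X_{\sing}$, and that the integration of vector fields genuinely produces an analytic neighborhood — both of which are standard given global generation of $T_X$ near $C$.
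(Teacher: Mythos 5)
Your proof of the second assertion is the paper's own, step for step: produce a complete positive\--dimensional $C\subset F$ avoiding $X_{\sing}$, use global generation of $T_X$ near $C$ to translate $C$ into nearby fibres, reduce to the case of a general fibre, and derive a contradiction from $N_{F/X}|_C\simeq N_{F/L}|_C$ with the left side trivial and the right side a sum of copies of $\O_C(1)$. The two justifications you add (triviality of $N_{F/X}|_C$ over the locus where $\gamma_X$ is smooth onto its image, and global generation of $N_{F/L}(-1)$) are correct and are exactly what the paper leaves implicit. One local repair: your stated reason that each translate $C_t$ lies in a single fibre (``the $\delta_a$ move points within the common tangent plane'') is not the right one --- the vector fields used to translate $C$ are general local sections of $T_X$ and do not preserve the Gauss foliation; that is precisely why the translates fill a neighborhood instead of staying in one fibre. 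The correct reason is rigidity: $C$ is complete and $\gamma_X$ contracts $C_0=C$ to a point, hence contracts every nearby deformation $C_t$ to a point.

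For the first assertion there is a genuine gap. Vanishing of the second fundamental form in the directions tangent to $F$ (equivalently, constancy of $\tilde T_xX$ along $F$) shows only that every embedded tangent space of $F$ lies in the fixed $n$-plane $A$, hence that the connected variety $F$ is contained in $A$; it says nothing about how $F$ sits inside $A$ and does not, by itself, exclude (say) a conic inside a plane $A$ from occurring as a fibre. Your sentence ``the Gauss map being constant forces the restriction of the embedding to the linear span of $F$ to be affine-linear'' is a restatement of the conclusion rather than an argument for it --- that implication is the entire content of the linearity theorem. The paper's actual proof (Remark \ref{linear0}) runs differently from your sketch: foliate the Gauss image by curves to reduce to a one-dimensional base, pass to the normalization $Y'$, and compare $\O_F=N_{F/Y'}$ with $N_{F/L}$, where $L$ is the common tangent plane; if $F$ were not a hyperplane of its span, the apparent contours $F_v=\{y\in F: v\in \tilde T_yY\}$ would sweep out $F$, forcing $Y'\to L$ to be ramified at a general point of $F$, which is absurd. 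Since you explicitly defer to Griffiths--Harris and to Remark \ref{linear0}, the first assertion can stand as a citation, but the sketch as written would not close up into a proof of linearity.
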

\begin{example}[due to Mezzetti-Tommasi]
	It is possible for $\gamma_X$ to be generically finite but have isolated
	positive-dimensional fibres not contained in the singular locus. Moreover these
	need no be linear spaces. In fact E. Mezzetti and O. Tommasi (pers. comm.)
	point out the quartic surface in $\P^3$ with equation
	\[(x^2+y^2+z^2+3w^2)^2-16(x^2+y^2)w^2=0\]
	with singular locus $w=0$. The surface meets the plane $z=w$ in a double conic,
	which is a fibre of the (generically finite) Gauss map.
	\end{example}

\begin{example}
	In the $\P^5$ of plane conics, let $X$ be the hypersurface
	consisting of line-pairs, whose singular locus is the 
	Veronese surface $V$ 
	of double lines. At a point $L_1+L_2\in X\setminus V$,
	the tangent hyperplane to $X$ is the set of conics through
	$L_1\cap L_2$. Hence a fibre of the Gauss map is the set of all
	pairs of distinct lines through a fixed point, which is a
	$\P^2$ minus a conic.
	\end{example}
	\subsection{Second fundamental form} See \cite{griffiths-harris-ldg} for a detailed 
	presentation.
	We recall that given a smooth point $p$ on an $n$-dimensional variety $X\to\P^N$,
	the second fundamental form at $p$, denoted $\mathrm{II}$ or $\mathrm{II}_p$ or $\mathrm{II}_{p,X}$
	is the derivative at $p$ of the Gauss map $\gamma_X:X\to\gG:=\gG(n,N)$.
	As such, it is a map $\mathrm{II}:T_pX\to T_{\tilde T_pX}(\gG)$. Identifying $\P^N$ as $\P(\C^{N+1})$
	and denoting by $\hat T\subset\C^{N+1}$ the linear subspace corresponding to $\tilde T_pX$,
	we can view $\mathrm{II}$ as a map
	\[\mathrm{II}:T_pX\to \Hom(\hat T, \C^{N+1}/\hat T)=\Hom(T_pX, T_p\P^N/T_pX).\]
	Now $T_p\P^N/T_pX=N_pX$ is just the normal space at $p$, so we can view
	$\mathrm{II}$ as a bilinear form
	\[\mathrm{II}:T_pX\times T_pX\to N_pX\]
	It is well known (cf. \cite{griffiths-harris-ldg}, 1(b)) that this form is symmetric, i.e. a quadratic form, called
	the second fundamental form of $X$ at $p$. If we represent $X$ locally as a graph with equations
	\[y_{n+i}=f_i(x_1,...,x_n), i=1,...,N-n, f_i\in\m_p^2,\]
	then $\mathrm{II}_p$ is represented by the multi-matrix (vector of  symmetric martices)
	\[(\frac{\partial^2f_1}{\partial x_i\partial x_j}, ...,\frac{\partial^2f_{N-n}}{\partial x_i\partial x_j}).\]
\subsection{Gaussian scrolls: antiderived scroll}\label{stationary-leading}
Consider as in \S \ref{stationary-gaussian} as Gaussian (stationary) scroll $(X_B, f)$
with classifying map $\phi:B\to \gG(k, N)$ with associated derived scroll $X^{(1)}_B$.
%We work at a general point of $B$ so we may assume $B\to B^{(1)}$ is a smooth 
%Given we know that a stationary scroll is a union of 
%a family of linear spaces,
%it is convenient to switch to a parametric viewpoint as in \S \ref{stationary-gaussian} and henceforth
%work with a parametric (immersed) stationary scroll. We recall that by definition \ref{stationary-defn} this
%consists of a generically finite map from an irreducible variety
%\[c:B\to \gG(k, N), k\geq 0,\]
%such that the the natural map
%from the induced $\P^k$-bundle $f:X=X_B\to\P^N$,
%called the \emph{spreading map}, induces an isomorphism of each fibre
%to its image in $\P^N$, and that the image of the derivative $d_xf$
%is independent of $x$ as $x$ varies in some dense open subset
%of a general fibre of $X_B/B$, so that these derivatives glue to a map
%\[\gamma_c=\gamma_X: B\to\gG(n,N), n=\dim(X).\]
%We are mainly interested in $X_B$ in a neighborhood if its general
%fibre and therefore feel free to change $B$ by a birational or even generically finite
%morphism.\par
For example, if $k=N-1$, $X_B$ just  corresponds to a 
generically finite map to its image in
the dual projective space:
\[ \phi:B\to\phi(B)\subset\gG(N-1, N)=\check\P^N.\]
In that case, the tangent developable to $\phi(B)$ (as 'plain' projective variety) 
corresponds in $\P^N$
to the 'leading edge' or 'cuspidal edge' or 'antiderived
scroll', denoted  $L(X/B)$ or $X^{(-1)}_B$.\par
We now extend this definition to the case of general fibre dimension $k$.
 Informally, denoting the fibres of $X/B$ by
$\P^k_b$, $L(X/B)=X_B^{(-1)}$ is the- possibly empty (!)-  scroll over $B$ with fibre
 at a general point
$b\in B$ equal to 
\eqspl{edge}{
	X^{(-1)}_b=\P^k_b\cap\bigcap\limits_{i=1}^g\partial\P^k_b/\partial t_i}
where $t_1,...,t_g$ are local coordinates at $b$, $g=\dim(B)$. An equivalent, more formal
definition is the following.
\begin{defn} With the above notations, set 
\[\P^k_b=\P(T), N=N_{P^k_b/X,x}=T_{X, x}/T_{\P^k_b, x},  x\in \P^k_b\] 
(independent of $x$), 
and let
\eqspl{nu}{\nu: T\to \Hom(T_bB, N)))
} be the normal map.
The the leading edge or antiderived scroll of $X/B$ is the sub-scroll  $X^{(-1)}_B=L(X/B)$
with fibres
\[X^{(-1)}_b=\P(\ker(\nu)).\]
\end{defn}
A natural way to describe the antiderived scroll is via
the dual projective space: if we denote by $X^\perp_B$ the dual $\P^{N-k-1}$ scroll in the dual
projective space $\check \P^N$, then
\eqspl{antiderived-eq}{X^{(-1)}_B=(X_B^\perp)^{(1)\perp}.}
\begin{rem}
When $k=N-1$,  it
 is clear e.g. by duality (see \S \ref{duality} below) that the
fibre dimension of $L(X/B)$ is $N-1-g$
which is the expected, but in general the fibre dimension
can exceed the expected (though it is always $<k$ provided $g>0$, thanks
to $c$ being generically finite).\end{rem}
	\begin{example}\label{curves}
	Given a nondegenerate curve $X\subset\P^N$, one has the associated osculating scrolls 
	$X^{(i)}_B, B=X, i<N$ which form a Gaussian flag. We have 
	$L(X_B^{(i)})=(X_B^{(i)})^{(-1)}=X^{(i-1)}_B$.
	\end{example}
A scroll $X/B$ is said to be \emph{filling} if the spreading map
$f:X\to\P^N$ is surjective. Since char. = 0 this is equivalent to the condition 
that at a general point $x\in X$ the derivative $d_xf$ is surjective.
The filling scrolls are exactly those whose dual $X^\perp_B$
has \emph{empty} leading edge  (see \S \ref{duality}).
\par
For future reference it is convenient to note the following, probably well-known
fact.
\begin{lem}\label{hyperplane}
	Let $X\subset\P^N$ be a variety with Gauss deficiency $k$.
	Then a general hyperplane section of $X$ has Gauss deficiency $k-1$.
	\end{lem}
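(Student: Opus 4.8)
We may assume $k\ge 1$, the statement being vacuous otherwise, and we write $n=\dim X$, so $n\ge 2$. Fix notation: let $\Gamma\subseteq\gG(n,N)$ be the closure of the image of $\gamma_X$, so $\dim\Gamma=n-k$; for general $\xi\in\Gamma$ write $F_\xi$ for the fibre of $\gamma_X$ over $\xi$, an open subset of a $k$-plane by Proposition~\ref{gauss-fibre-prop}, and identify $\xi$ with the $n$-plane $A_\xi\subset\P^N$ it represents, along which $X$ is tangent to $A_\xi$ everywhere on $F_\xi$. Throughout let $H$ be a general hyperplane and $Y=X\cap H$; by Bertini $Y$ is irreducible of dimension $n-1$ and smooth along $X_\sm\cap H$, and $H$ meets $X$ transversally there, so $\tilde T_yY=\tilde T_yX\cap H$ for $y\in X_\sm\cap H$. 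The plan is to prove the inequalities ``$\ge k-1$'' and ``$\le k-1$'' for the Gauss deficiency of $Y$ separately.

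For the easy inequality I would take a general $F_\xi$: for general $H$ the set $F_\xi\cap H$ is an open subset of a $(k-1)$-plane whose general point is a general point of $F_\xi$, hence lies in $X_\sm\cap H\subseteq Y_\sm$, and there $\tilde T_yY=\tilde T_yX\cap H=A_\xi\cap H$ is independent of $y$. Thus $\gamma_Y$ is constant on $F_\xi\cap H$ and its general fibre has dimension $\ge k-1$.

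For the reverse inequality the first move is to note that for general $y\in Y$ the plane $\gamma_Y(y)=\tilde T_yX\cap H$ depends only on $\gamma_X(y)$, so $\gamma_Y$ factors (as a rational map on $Y$) as
\[ Y\dashrightarrow\Gamma\xrightarrow{\ \phi_H\ }\gG(n-1,N-1),\qquad \phi_H([A])=[A\cap H], \]
the first arrow being $\gamma_X|_Y$. Since $k\ge1$, every fibre $F_\xi$ is a positive-dimensional linear space, hence meets $H$, so $\gamma_X|_Y$ is dominant onto $\Gamma$ with general fibre the $(k-1)$-plane $F_\xi\cap H$; therefore the general fibre of $\gamma_Y$ has dimension $(k-1)+\bigl(\dim\Gamma-\dim\phi_H(\Gamma)\bigr)$. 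Hence the Lemma reduces to showing that for general $H$ the restriction $\phi_H|_\Gamma$ is generically finite, i.e.\ $\dim\phi_H(\Gamma)=\dim\Gamma$.

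This last point is the substantive one, and the main obstacle, though once organized it is a routine dimension count. If $\xi\ne\xi'$ in $\Gamma$ satisfy $A_{\xi'}\cap H=A_\xi\cap H=:C$, then (as $A_\xi\not\subseteq H$) $C$ is an $(n-1)$-plane inside $A_\xi\cap A_{\xi'}$, and since $A_\xi\ne A_{\xi'}$ this forces $A_\xi\cap A_{\xi'}=C$; so $\xi'$ lies in $Z_\xi:=\{\xi'\in\Gamma:\dim(A_\xi\cap A_{\xi'})=n-1\}$ with $C=A_\xi\cap A_{\xi'}$. The key input is the elementary inequality $\dim Z_\xi\le\dim\Gamma=n-k\le n-1<n=\dim\check{A_\xi}$, which makes the morphism $Z_\xi\to\check{A_\xi}$, $\xi'\mapsto A_\xi\cap A_{\xi'}$, non-dominant; thus for $C$ outside a proper closed $\Sigma_\xi\subsetneq\check{A_\xi}$ no such $\xi'$ exists. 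As $H\mapsto A_\xi\cap H$ is dominant $\check\P^N\to\check{A_\xi}$ with $(N-n)$-dimensional fibres, the $H$ bad for a fixed $\xi$ form a subvariety of $\check\P^N$ of dimension $\le\dim\Sigma_\xi+(N-n)\le N-1$, so the incidence of bad pairs $(\xi,H)$ in $\Gamma\times\check\P^N$ has dimension $\le\dim\Gamma+(N-1)$, whence over a general $H$ the bad locus in $\Gamma$ has dimension $<\dim\Gamma$. Then $\phi_H|_\Gamma$ is injective on a dense open set, so $\dim\phi_H(\Gamma)=\dim\Gamma$ and we are done. (One could instead deduce the Lemma from the duality of \S\ref{duality} together with $\dim X^\vee=N-1-k$ and the fact that a general hyperplane section corresponds to a general projection of $X^\vee$, but I would prefer the self-contained route above.)
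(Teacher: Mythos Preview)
Your argument is correct, but it proceeds quite differently from the paper's proof. The paper invokes the Griffiths--Harris characterisation of the Gauss deficiency as $\dim\ker\II_{X,p}$ at a general point, then observes that for a general hyperplane $H$ one has $\II_{X\cap H,p}=\II_{X,p}|_{T_p(X\cap H)}$; since $T_p(X\cap H)=T_pX\cap H$ is, for general $H$ and $p$, a hyperplane in $T_pX$ not containing the $k$-dimensional kernel $K$, one gets $\ker\II_{X\cap H,p}=K\cap T_p(X\cap H)$ of dimension $k-1$. This is a two-line local computation once the $\II$ characterisation is in hand.

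Your route is instead global and set-theoretic: you factor $\gamma_Y$ as $\phi_H\circ(\gamma_X|_Y)$ and reduce to showing that the ``slice by $H$'' map $\phi_H:\Gamma\to\gG(n-1,N-1)$ is generically injective, which you do by a clean incidence count using only $\dim\Gamma=n-k<n=\dim\check A_\xi$. This is more elementary in that it avoids any appeal to the second fundamental form (hence is independent of the Griffiths--Harris input), at the cost of being longer. One small wrinkle: when you say ``every fibre $F_\xi$ is a positive-dimensional linear space, hence meets $H$'', remember that $F_\xi$ is only an \emph{open} subset of a $k$-plane, so $F_\xi\cap H$ could in principle be empty; but the same style of incidence count you use later shows this fails only for $\xi$ in a proper closed subset of $\Gamma$, which is all you need for dominance of $\gamma_X|_Y$.
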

\begin{proof}
	According to Griffiths-Harris \cite{griffiths-harris-ldg}, 
	(2.6) p. 387, the Gauss deficiency is characterized as the dimension
	of the kernel of the second fundamental form $\mathrm{II}_X$ at a general
	point. Now for a general hyperplane $H$ and a general point
	$p\in X\cap H$, we have
	\[\mathrm{II}_{X\cap H, p}=\mathrm{II}_X|_{T_p(X\cap H)}.\]
	From this the Lemma follows easily. 
	\end{proof}
Note that in our definition, the spreading map $f:X\to\P^N$
of a stationary scroll
is not assumed generically finite to its image, 
and in that case $X\to B$ may not be the actual
Gauss map of the image. Such scrolls, however, can be easily classified:
\begin{prop}
	(i) Let $X/B$ be a stationary scroll such that the spreading map
	$f:X\to \P^N$ is not generically finite to its image. Then there
	is a stationary scroll $X'/B'$ over a lower-dimensional
	base, together with a map $B\to B'$, such that for general $b'\in B'$,
	$X_{f\inv(b')}/f\inv(b')$ is a filling scroll of 
	the fibre projective space $X'_{b'}$.\par (ii) 
	Conversely given a stationary scroll $X'/B'$
	and a non-generically finite map $B\to B'$ together with a filling scroll
	$X/f\inv(b')$ for general $b'\in B'$, $X/B$ is a stationary scroll
	whose spreading map is not generically finite to its image. 
	\end{prop}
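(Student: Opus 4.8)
The plan is to take for $X'/B'$ the Gauss-map scroll of the \emph{spread} $Y:=f(X_B)\subset\P^N$, to manufacture the map $B\to B'$ out of the incidence ``$f(\P^k_b)$ lies in a fibre of the Gauss map $\gamma_Y$'', and then to verify the asserted properties by dimension bookkeeping; part (ii) will be obtained by reversing this.

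The substantive step in (i) is that the spread $Y$ is itself Gauss-deficient. Fix a general $b\in B$ and set $\Lambda_b:=f(\P^k_b)$, a linear $\P^k$ in $\P^N$ (the general ruling being carried isomorphically onto it), and note $k\geq 1$ since otherwise $f=\ul c$ is generically finite. A general $y\in\Lambda_b$ equals $f(x)$ for a unique $x\in\P^k_b$, is a smooth point of $Y$, and -- $f$ being dominant onto $Y$ in characteristic zero -- satisfies $T_yY=\im(d_xf)$; but by stationarity $\im(d_xf)$ is a linear space $T_b$ depending only on $b$. Thus the embedded tangent space of $Y$ is constant ($=T_b$) along $\Lambda_b$, so $\Lambda_b$ lies in the closure of a general fibre of $\gamma_Y$, and in particular $\gamma_Y$ is not generically finite. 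Take $X'/B'$ to be the Gauss-map scroll of $Y$ (which is a stationary scroll), with $X'_{b'}$ the closure of a general fibre of $\gamma_Y$ -- a linear $\P^{k'}$ with $k'\geq k$, by Proposition \ref{gauss-fibre-prop} -- and let $B\to B'$ send $b$ to the index $b'$ of the Gauss fibre containing $\Lambda_b$.

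Three things then remain, all dimension bookkeeping. Set $d:=\dim X_B-\dim Y>0$, so that $\dim Y=k+\dim B-d$ and $\dim B'=\dim Y-k'$; hence $\dim B-\dim B'=d+k'-k\geq d>0$, which gives at once that $B'$ is lower-dimensional than $B$ and that $B\to B'$ is not generically finite. For the filling assertion, let $B_{b'}$ be the fibre of $B\to B'$ over a general $b'$; by construction $f(X_{B_{b'}})=\bigcup_{b\in B_{b'}}\Lambda_b\subseteq X'_{b'}$, while a general point $y$ of $X'_{b'}$ is a general point of $Y$, hence lies on some $\Lambda_{b_0}\subseteq X'_{h(b_0)}$, and since a general point of $Y$ lies on a unique general Gauss fibre this forces $h(b_0)=b'$. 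So $f(X_{B_{b'}})$ is dense in $X'_{b'}$, i.e. $X_{B_{b'}}/B_{b'}$ is a filling scroll of $X'_{b'}$ -- in fact overfilling, its total space having dimension $k+\dim B_{b'}=k'+d>k'$.

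For (ii) one simply reverses the construction. The data assemble (tacitly compatibly) into a $\P^k$-bundle $X/B$ with a spreading map $f:X\to\P^N$ whose image is $\bigcup_{b'}X'_{b'}$. That $X/B$ is a genuine parametric scroll -- its classifying map generically finite onto its image -- holds on each fibre $B_{b'}$ of $h$, where it is the classifying map of the scroll $X_{B_{b'}}\to X'_{b'}\cong\P^{k'}$, and may in any case be arranged by the paper's birational conventions. Stationarity is the heart of the converse: for general $b\in B$ with $b'=h(b)$, a general $x\in\P^k_b$ is a general point of $X_{B_{b'}}$, so the derivative of $X_{B_{b'}}\to X'_{b'}$ at $x$ is surjective (filling-ness, char.\ $0$), whence $\im(d_xf)$ is the embedded tangent space to $X'_{b'}$ at $f(x)$; since $X'_{b'}$ is linear this is $X'_{b'}$ itself, a space depending only on $b'=h(b)$ -- the required constancy. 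Finally $f$ is not generically finite: over a general $y\in X'_{b'}$ the fibre $f\inv(y)$ contains the positive-dimensional fibre of $X_{B_{b'}}\to X'_{b'}$ over $y$, so ``filling'' must here be read as ``overfilling'' (or at least with enough strength to keep $f$ non-finite). The one delicate point in the whole argument is the Gauss-deficiency of $Y$ in (i): it rests on combining stationarity with the characteristic-zero fact that $\im(d_xf)$ is the embedded tangent space to the image at a general $x$. Everything else -- the dimension counts, the density argument giving $f(X_{B_{b'}})=X'_{b'}$, and this reversal -- is routine once genericity is tracked with care.
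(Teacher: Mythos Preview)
Your approach to (i) differs from the paper's: the paper takes $X'/B'$ to be the descent of the derived scroll $X^{(1)}_B$, so that $X'_{b'}=\im(d_xf)$ is the \emph{embedded tangent space} to $Y=f(X)$ along the ruling, whereas you take $X'_{b'}$ to be the \emph{Gauss fibre} of $Y$. These are different scrolls over (essentially) the same base, and yours is the one for which the filling assertion actually holds. With the paper's choice one only gets $\bigcup_{b\in B_{b'}}\Lambda_b\subseteq Y\cap X'_{b'}$, which need not fill the tangent space: for instance let $Y=T^{(2)}C$ for a nondegenerate curve $C\subset\P^5$ and take $X/B$ to be the family of all lines in the osculating planes; then the paper's $X'_{b'}$ is the third osculating $\P^3$ but $X_{B_{b'}}$ only sweeps the osculating $\P^2$. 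So your construction is the correct realization of the statement, and your verification of filling and of $\dim B'<\dim B$ via the dimension bookkeeping is sound.

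There is, however, a gap in your proof of (ii). You argue that $\im(d_xf)=X'_{b'}$ because the restricted map $X_{B_{b'}}\to X'_{b'}$ is a submersion at general $x$; but $d_xf$ also sees directions in $T_xX$ transverse to $X_{B_{b'}}$, namely those along which $b'$ varies, and these push the image strictly beyond $X'_{b'}$ whenever $\dim B'>0$. The fix is immediate once you invoke the one hypothesis you have not yet used---stationarity of $X'/B'$. Factoring $f$ as $X\to X'\xrightarrow{f'}\P^N$ (the first map surjective by filling, hence generically submersive in characteristic $0$) gives $\im(d_xf)=\im(d_{\pi(x)}f')$ for general $x$, and this depends only on $b'$ precisely by stationarity of $X'$. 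With this correction your argument goes through. Your closing observation that the converse really requires ``overfilling'' rather than ``filling'' to force $f$ non-finite is well taken, and is consistent with what your part (i) actually produces.
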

\begin{proof}
	By assumption the (well-defined)
	map $b\mapsto \im(df_x), x\in f\inv(b)$ factors through a lower-dimensional
	image $B'$ of $B$, which yields the scroll $X'/B'$. This scroll is stationary
	as it is just a descent via $B\to B'$ of the tangential scroll $X^{(1)}_B$.
	The converse is obvious.
	\end{proof}
\section{Gaussian flags: duality}\label{duality}
\subsection{Derived scrolls; Gaussian flags}
We recall from \S \ref{gaussian-flags-sec}  that a Gaussian flag in $\P^N$ has the form
\eqspl{forward}{X_B\subset X^{(1)}_B\subset...\subset X^{(\l)}_B,}
together with a spreading map $f:X^{(\l)}_B\to\P^N$ which we assume does not map onto $\P^N$
or into any proper linear subspace..
In this case it follows that the inclusions above are proper.
Thus any Gaussian scroll can be extended 'upwards' to a (uniquely determined)
maximal Gaussian flag containing $X_B$. 
This is called the osculating Gaussian flag associated to $X_B$.

\par
\subsection{Antiderived scrolls}\label{antiderived-sec}
Recall that we have defined the (possibly empty) antiderived scroll of
a Gaussian scroll $X_B$ as
\[(X_B)^{(-1)}=(X_B^\perp)^{(1)\perp}.\]
This again is Gaussian- this folows from the  fact it is proven in the Duality Theorem below that
$(X_B^{(-1)})^{(1)}=X_B$. Therefore $X_B^{(-1)}$  may be further antiderived etc., yielding what might be called
the anti-osculating flag of of $X_B$:

%/***********
%Similarly, a stationary scroll $X_B$ may be extended downwards by forming
%antiderived scrolls as in \eqref{edge}: more formally we consider for 
%general $b\in B$ the derivative
% \[d_bc:T_bB\to T_{c(b)}(\gG(k,N))=\Hom(S, Q)\]
% and its transpose
% \[d_bc^t:S\to \Hom(T_bB, Q)\]
% and define the \emph{antiderived scroll} $X^{(-1)}_B\subsetneq X_B=\P(S)$ as the
% projective bundle over $B$
% with fibres projectivizations of $\ker(d_bc^t)$
% (provided this is is nonempty of course). Iterating as long as we don't
% get the empty scroll,
% we get a descending flag (which is also Gaussian as we shall see below)
 %******************/
 \eqspl{downflag}{
 	X_B\supsetneq X^{(-1)}_B\supsetneq...\supsetneq X^{(-m)}_B
 } where $X^{(-m-1)}_B=\emptyset$.
 Now the Duality Theorem \ref{duality-thm} below will show that the anti-osculating flag 
 is in fact a Gaussian flag, 
hence the flags \eqref{forward} and \eqref{downflag} may be spliced together
to yield the (uniquely determined) \emph{maximal Gaussian flag},
extending the given stationary scroll $X_B$ both upwards and downwards. $m$ may be called the
(osculating) \emph{index} of $X_B$. Note that $X_B$- or for that matter
any $X_B^{(-i)}$- is a cone iff the bottom member $X_B^{(-m)}$ is a linear subspace.
   \par
   \subsection{Duality}
   
Now in general a Gaussian flag 
$X_B^{(0)}\subsetneq ...\subsetneq X^{(\l)}_B$
corresponds to a classifying map to a flag
variety \[c:B\to\fF(k_0,...,k_\l, N).\] Using the identification
\[\fF(k_\bullet, N)\simeq\fF(N-k_\bullet-1,  N),\]
a Gaussian flag $( X^{(\bullet)}_B)$ corresponds to a flag of scrolls
in the dual projective space $\check\P^N$ that we
call the \emph{linear dual}\footnote{The terminology
	is chosen to avoid confusion
	with dual variety.} flag and denote by
\[(X^{(\bullet)\perp}_B):
\ \ X^{(\l)\perp}_B\subsetneq...\subsetneq X^{(0)\perp}_B=X^\perp_B.\] 
This has the property that
\[ (X^{(i)\perp})_B^{(-1)}=X_B^{(i-1)}.\]
It is not a priori clear that the dual flag is Gaussian, but this is
a consequence of the Duality Theorem that we now state and prove:
\begin{thm}[Duality]\label{duality-thm}
	Assumptions as above, assume also that
	one (or equivalently every) $X_B^{(i)}$ is nondegenerate
	and not a cone. Then$(X_B^{(\bullet)\perp})$ is a Gaussian
	flag and its linear dual is $(X_B^{(\bullet)})$.
	\end{thm}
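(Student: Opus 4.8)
The plan is to work at the level of the classifying map to the flag variety and exploit the canonical identification $\fF(k_\bullet, N)\simeq\fF(N-k_\bullet-1, N)$. Write $\ul c:B\cdots\to\fF(k_0,\dots,k_\l,N)$ for the classifying map of the given Gaussian flag $(X_B^{(\bullet)})$, and let $\ul c^\perp:B\cdots\to\fF(N-k_\l-1,\dots,N-k_0-1,N)$ be its composite with the canonical isomorphism. The first point is that $\ul c^\perp$ is again generically finite to its image (the isomorphism of flag varieties is an isomorphism), so $(X_B^{(\bullet)\perp})$ is a bona fide flag of parametric scrolls in $\check\P^N$. The content of the theorem is therefore the \emph{stationarity} (Gaussianity) of this dual flag, together with the biduality statement $(X_B^{(\bullet)\perp})^\perp = (X_B^{(\bullet)})$. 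Biduality is essentially formal: it follows from the fact that the double-perp on linear subspaces of $\P^N$ is the identity, applied fibrewise, so I would dispose of it first in a sentence.

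\textbf{Key identity.} The heart of the matter is to show: for general $b\in B$, the fibre of $X_B^{(i-1)\perp}$ over $b$, i.e. $(X_b^{(i-1)})^\perp$, is obtained from the fibre $(X_b^{(i)})^\perp$ of $X_B^{(i)\perp}$ by the derived-scroll (osculation) construction, \emph{and} the fibre of $X_B^{(i+1)\perp}$ is obtained from $(X_b^{(i)})^\perp$ by the antiderived construction. In other words, the ``go up'' operation on the original flag becomes the ``go down'' operation on the dual, and vice versa. This is exactly the dual pairing already flagged in the introduction and in \S\ref{duality} (``the dual to $X^{\perp(1)}_B$ is the antiderived scroll of $X_B$''). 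To prove it I would compute both sides in terms of the derivative of the classifying map. Recall $T_{c(b)}\gG(k,N)=\Hom(S,Q)$ with $S$ the tautological sub and $Q=\C^{N+1}/S$; the derived scroll is built from $d_b\ul c:T_bB\to\Hom(S,Q)$ by taking, over the fibre $F=\P(S)$, the image subsheaf of $T_bB\otimes\O_F\to N_{F/\P^N}$, while the antiderived scroll is the zero locus of the transposed map $S\to\Hom(T_bB,Q)$. Under the duality $\gG(k,N)\simeq\gG(N-k-1,N)$, $S\leftrightarrow Q^*$ and $Q\leftrightarrow S^*$, and $T\gG$ is self-dual under the pairing $\Hom(S,Q)\simeq\Hom(Q^*,S^*)=\Hom(S^\perp/?, \dots)$ — more precisely $d_b\ul c^\perp$ is (minus) the transpose of $d_b\ul c$ under the natural identifications. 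From this transposition it is a linear-algebra exercise that $\ker$ on one side matches $\mathrm{coker}$/image on the other, which is precisely the interchange of derived and antiderived. One must check this compatibly along the whole flag, using that the flag structure means the higher derivatives are encoded by the induced maps on the successive quotients $S_i/S_{i-1}$; the stationarity hypothesis guarantees these induced maps have the constancy needed so that the construction is well-defined at the general fibre.

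\textbf{Consequences and bookkeeping.} Granting the key identity, the theorem follows by a finite induction walking along the flag. The top member $X_B^{(\l)}$ is by hypothesis filling (its fibres are proper linear subspaces that fill $\P^N$), hence by the remark at the end of \S\ref{duality} its dual $X_B^{(\l)\perp}$ has empty antiderived scroll — i.e. $X_B^{(\l)\perp}$ is the bottom member of the dual flag, consistent with the reversal of order. Dually, $X_B^{(0)\perp}=X_B^\perp$ is filling because $X_B^{(0)}$ has empty antiderived scroll — which we may assume after replacing $X_B$ by the bottom member $X_B^{(-m)}$ of the maximal flag, or rather we argue with the maximal flag from the start so that the hypothesis ``not a cone'' guarantees the bottom member is not a linear space and the flag genuinely terminates. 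Applying the key identity at each internal step shows that each $X_B^{(i)\perp}$ is the derived scroll of $X_B^{(i+1)\perp}$ (equivalently the antiderived scroll of $X_B^{(i-1)\perp}$), so the reversed sequence $(X_B^{(\bullet)\perp})$ is a Gaussian flag; and since each member is nondegenerate and not a cone iff the corresponding $X_B^{(i)}$ is (this is symmetric under perp — nondegenerate means the scroll is not contained in a hyperplane, i.e. its dual is not a cone, and vice versa), the hypotheses propagate. Finally biduality $(X_B^{(\bullet)\perp})^\perp=(X_B^{(\bullet)})$ is immediate since double-perp is the identity on the flag variety.

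\textbf{Main obstacle.} The delicate point is the key identity at the level of \emph{higher} osculation: for $i\ge 2$ the scroll $X_B^{(i)}$ is defined by iterating the derived-scroll construction, and one must verify that this iteration, transported through the duality of flag varieties, really does match the iteration of the antiderived construction on the dual flag — i.e. that the ``derivative-of-the-classifying-map'' description is stable under the iteration in a way compatible with duality. This requires keeping careful track of which quotient bundle $Q_i=\C^{N+1}/S_i$ the $i$-th derivative lives in, and checking that stationarity (constancy of $\mathrm{im}(d_xf)$ along the fibre) is exactly what makes the transposed/dual maps land in the right place at the general fibre. I expect this bookkeeping — rather than any single hard theorem — to be where the real work lies; once the rank-one step is set up symmetrically the higher steps should follow by the same transposition applied to the successive subquotients.
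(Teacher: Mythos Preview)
Your transposition argument correctly shows that under $\gG(k,N)\simeq\gG(N-k-1,N)$ the derivative $d_b\ul c^\perp$ is the transpose of $d_b\ul c$, and this does give one half of the key identity: the \emph{antiderived} scroll of $X^{(i)\perp}$ equals $(X^{(i+1)})^\perp$, because the kernel of the transposed map is the annihilator of the total image. But the other half --- that $X^{(i)\perp}$ is \emph{stationary} with derived scroll $(X^{(i-1)})^\perp$ --- does not follow from transposition. Concretely, stationarity of $X^{(i)}$ says that for general $s\in V_i$ the span $\{\phi_v(s):v\in T_bB\}$ is a fixed subspace of $\C^{N+1}/V_i$; what you need for stationarity of $X^{(i)\perp}$ is the transposed statement, that for general $q^*\in (\C^{N+1}/V_i)^*$ the span $\{q^*\circ\phi_v:v\}\subset V_i^*$ is fixed. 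These two ``constancy'' conditions on a bilinear map $T_bB\otimes V_i\to \C^{N+1}/V_i$ are not formally equivalent, and your appeal to ``the stationarity hypothesis guarantees the constancy needed'' does not bridge this: the hypothesis is on the original flag, the conclusion is on the dual.

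The paper closes this gap by a genuinely geometric argument that you omit entirely. The key equality is reformulated as: the leading edge of $Y^{(1)}$ is exactly $Y$, i.e.\ $\bigcap_{y'\in N_1(y)}\tilde T_{y'}Y=F$ (the Gauss fibre through $y$), not larger. The containment $\supseteq$ is the easy direction (equivalent to your transposition inclusion). For the reverse inclusion the paper invokes the second fundamental form $\II_X=\sum\Phi_i v_i$ and shows that the common nullspace of all interior products $\langle\Phi_i,u\rangle$ equals $T_pF$; this uses that the Gauss map $\gamma_X$ is \emph{submersive to its image} at a general point (a characteristic-zero fact, via generic smoothness), and is carried out by diagonalizing each $\Phi_i$ in a Darboux frame, reducing to the hypersurface case by generic projection as in Griffiths--Harris. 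Without this input your argument only recovers the obvious inclusion $(Y^{(1)\perp})^{(1)}\supseteq Y^\perp$, which the paper explicitly flags as the trivial direction.
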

	\begin{rem}
	A trivial- but not entirely un-representative case of the duality theorem is:
	start with a variety $X$ with generically finite Gauss map and take
	$x\in X$ general  and let $N_1(x)$ be the first infinitesimal
	neighborhood of $x$ in $X$. Then
	\[\bigcap\limits_{x'\in N_1(x)}\tilde T_{x'}X=\{x\}.\]
	This is easy to check directly and will be proven in a more general form below
	(without the hypothesis of generically finite Gauss map).
	\end{rem}
\begin{proof}[Proof of Duality Theorem]
	The fact that the linear double dual   coincides with the original
	(i.e. $X_B^{\perp\perp}=X_B$) is just the simple linear algebra
	fact that $(Y^\perp)^\perp=Y$ for any subspace $Y$.
	The point is that $(X_B^{(\bullet)\perp})$
	is Gaussian, i.e. that
	\[((X_B^{(j+1)\perp})^{(1)}= (X_B^{(j)})^\perp.\]
	This is an assertion about the stationary scroll $X_B^{(j)}$ so to
	simplify notation set $Y_B=X_B^{(j)}$. Then 
	Then we have short flags
	\[Y_B\subset Y^{(1)}_B,\]
	\[ {Y_B^{(1)\perp(1)}}\supset Y^{(1)\perp}_B,\]
	and the assertion becomes that these are mutually dual,i.e. 
	that,
		for any stationary scroll $Y_B$, we have 
		\[Y_B=Y_B^{(1)\perp(1)\perp}\]
		or equivalently
	\eqspl{biduality}{(Y_B^{(1)\perp(1)}=Y_B^\perp.}
	Applying $\perp$ to \eqref{biduality}, an equivalent form is
	\[Y_B=(Y_B^{(1)})^{(-1)}.\]
	In other words, the leading edge of $Y^{(1)}_B$ is $Y_B$.
	Substituting $Y^\perp_B$ for $Y$ in \eqref{biduality}, yet another equivalent form is
	\eqspl{-1+1}{Y_B=(Y_B^{(-1)})^{(1)}.}
In other words, $Y_B^{(-1)}$- provided it is nonempty-
	is Gaussian with
	derived scroll $Y_B$.
		Note that because $Y_B$ is effective \eqref{-1+1}6 implies that
		$Y^{(-1)}_B$ is actually Gaussian.
	\par Now as for \eqref{biduality},
	note that the inclusion 
	\eqspl{oneway}{(Y_B^{(1)\perp})^{(1)}\subseteq Y_B^\perp} 
	or equivalently
	\eqspl{onewaydual}{
	(Y_B^{(1)\perp})^{(1)\perp}\supseteq Y_B} 

	is obvious from the definitions. Indeed
	pick a general point 
	$y\in Y$ and let $N_1(y)$ be its first-order neighborhood in $Y$
	and consider  the intersection of the tangent spaces $\tilde T_{y'}Y$
	as $y'$ ranges over $N_1(y)$, i.e. 
	$\bigcap\limits_{y'\in N_1(y)}\tilde T_{y'}Y\subset T_yY$ or,
	what is the same 
	$\bigcap\limits_v (\tilde T_yY\cap(\frac{\partial}{\partial v} 
		\tilde T_{y'}Y))$
	where $v$ ranges over $T_yY$ or any basis thereof. 
	What is 
	obvious is that if $F$ denotes the fibre of $Y/B$ through $y$,
	which is itself a $\P^k$, then
	the latter intersection contains $\tilde T_yF=F$, i.e.
	\[F\subseteq \bigcap\limits_{y'\in N_1(y)}\tilde T_{y'}Y\]
	(this is just \eqref{onewaydual}). The point in \eqref{biduality} is that the latter inclusion
	is an equality. Of course $\tilde T_{y'}Y$ depends only on the factored Gauss map
	$\bar\gamma$, so the equality in question becomes
	\eqspl{f}{F=\bigcap\limits_{b'\in N_1(b)}\bar\gamma(b').}
	Now the question is local at a general point $y\in F$, where locally $Y\to B$
	is just the Gauss map of the spread of $Y$ in $\P^N$, which may identified  with $Y$.
	Note that at the level of tangent spaces at $y$, the RHS of \eqref{f} is just
	the kernel of the second fundamental form $II_Y$, viewed and a map $T_yY\to\Hom(T_yY, N_yY)$.
	As $II_Y$ is the derivative of the Gauss  map $\gamma$ and $y$ is general, the kernel in question
	coincides with the tangent space to the fibre of $\gamma$, i.e. $F$. Therefore \eqref{f} holds.

	\end{proof}
\begin{rem}
	Another- actually not much different from that
	in \cite{griffiths-harris-ldg}- 
	proof of duality may be given as follows. First, using Lemma
	\ref{hyperplane}, we may assume that $X$ is a variety with generically finite
	Gauss map, hence nondegenerate second fundmental form $\mathrm{II}_X$. 
	What has to be proven is that the leading edge of $X^{(1)}$
	coincided with $X$. Generically projecting, 
	we may assume $X$ is a hypersurface, so $X^{(1)\perp}$
	is just a subvariety in the dual projective space $\check\P^N$. 
	Working locally and writing a general point of $X$ parametrically as $p(t)$,
	a point of $X^{(1)\perp}$ can be written
	parametrically as 
	\[p'(t)=\carets{p\wedge(\del p/\del t_1)\wedge...\wedge(\del p/\del t_n)}
	\in\check\P^N.\]
	We may assume the tangent vectors $\del/\del t_i$ are eigenvectors
	for the (nondegenerate, scalar-valued) second fundamental form $II_X$.
	Then
	\[\del p'/\del t_i=
	\carets{p\wedge(\del p/\del t_1)\wedge...\what{\del p/\del p_i}
		...\wedge(\del p/\del t_n)}\del^2 p/\del t_i^2, \]
	where $\del^2 p/\del t_i^2$ is a nonzero scalar by nondegeneracy of
	$\mathrm{II}_X$. This yields a nonzero linear form, i.e. hyperplane in 
	$\tilde T_pX$
	and because $p, \del p/\del t_1,...,\del p/\del t_n$ are a basis
	for $\tilde T_pX$, the intersection of these for $1=1,...,n$ is just $p$,
	which proves our assertion.
	\end{rem}
	\begin{example}\label{d-upple}
	Consider the $d$-uple Veronese surface $X\subset\P^N$, $N=(d+1)(d+2)/2-1$,
	 of degree $d^2$,
	which may be considered a trivial stationary scroll (fibre dimension 0).
	The tangent developable $X^{(1)}=\bigcup\limits_{p\in X}\tilde T_pX$ 
	is a stationary scroll of fibre dimension 2 with leading edge $X$.
Then $X^{(1)}$  is dual to the discriminant hypersurface $Y=\bigcup\limits_{p\in X}\tilde T_pX^\perp$
	of nodal plane cubics, 	of degree $3(d-1)^2$, which is a stationary scroll over $X$ with fibre dimension $N-3$,
	 and the tangent developable $Y^{(1)}$ is just dual to the original Veronese $X$. 
	 This is well known and illustrates the duality between tangent developable and leading edge.
	 \par Similar constructions can be made for any surface $X\subset\P^N$.
	\end{example}
	\subsection{Consequences of duality}
Now for a stationary scroll $X/B$ with osculating flag $X^{(\bullet)}_B$
let $n_i$ be the dimension of the image of $X^{(i)}_B$ in $\P^N$, 
which is also the fibre dimension of $X^{(i+1)}_B$ over $B$, so
that $\dim(X^{i+1)}_B)=g+n_i$. We call $(n_\bullet)$ the \emph{oculating
	dimension sequence} of the stationary scroll $X_B$.
\begin{cor}
	The assignment
	\[ X_B \leftrightarrow X^\perp_B\]
	 yields an idempotent bijection between Gaussian scrolls
	with with fibre dimension $k$
	$\P^N$
	and Gussian  scrolls with  fibre dimension 	$N-1-k$  in $\check\P^N$.
	Under this bijection $X^{(1)}_B$ corresponds to $(X^\perp_B)^{(-1)}$.
	\end{cor}
\begin{cor}\label{hypersurface}
	Let $X$ be a Gussian scroll of Gauss dimension $g$ such that
	$X$ (resp. the $i$-th osculating scroll of $X$) is a hypersurface.
	Then $X$ is linearly dual to a tangential scroll (resp. $(i+1)$-st 
	osculating scroll) of a $g$-dimensional variety.
	\end{cor}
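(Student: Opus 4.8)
Corollary \ref{hypersurface} is the final statement; here is how I would prove it.

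\bigskip

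The plan is to apply the Duality Theorem and Corollary \ref{hypersurface}'s immediate predecessor (the bijection corollary) to the case where some member of the osculating flag of $X_B$ is a hypersurface, i.e. has image of dimension $N-1$ in $\P^N$. First I would treat the base case where $X$ itself is the hypersurface. Then in the osculating dimension sequence $(n_\bullet)$ we have $n_0 = N-1$, so the dual sequence $(N-1-n_\bullet)$ reversed begins (at its bottom) with $N-1-n_0 = 0$. By the Duality Theorem, the linear dual flag $(X^{(\bullet)\perp}_B)$ is again a Gaussian flag, and its bottom member $X^{(0)\perp}_B = X^\perp_B$ has fibre dimension $0$ over $B$; that is, $X^\perp_B$ is (the parametric scroll associated to) an honest $g$-dimensional variety $Y \subset \check\P^N$, where $g = \dim(B)$ is the Gauss dimension. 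Now $X_B = X_B^{\perp\perp}$ is, by the Duality Theorem applied the other way, the derived (tangential) scroll $(X^\perp_B)^{(1)} = TY$, which is exactly the statement that $X$ is linearly dual to the tangential scroll of a $g$-dimensional variety.

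For the general case, suppose instead that the $i$-th osculating scroll $X^{(i)}_B$ is the hypersurface. Then $n_i = N-1$, and I apply the argument above to $X^{(i)}_B$ in place of $X_B$: its linear dual flag has a member of fibre dimension $0$ at the appropriate spot, namely $(X^{(i)}_B)^\perp = X^{(i)\perp}_B$. Iterating the biduality identity \eqref{biduality} — which says the leading edge of the derived scroll recovers the original, equivalently that passing to $(-1)$ undoes passing to $(1)$ — $i+1$ times, we see that $X^{(i)\perp}_B$, viewed as the bottom piece, is the $0$-dimensional-fibre scroll over $B$ associated to a $g$-dimensional variety $Y$, and that $X_B$ is obtained from it by applying the derived-scroll operation $i+1$ times; that is, $X$ is linearly dual to the $(i+1)$-st osculating scroll of $Y$. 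Concretely: $X_B = ((X^{(i)}_B)^\perp)^\perp$, and under linear duality the chain $X_B \subsetneq X^{(1)}_B \subsetneq \dots \subsetneq X^{(i)}_B$ becomes, in reverse, the osculating chain of the $g$-fold $Y = X^{(i)\perp}_B$ up through its $(i+1)$-st osculating scroll.

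The one point requiring a little care — and the main (mild) obstacle — is the nondegeneracy/non-cone hypothesis needed to invoke the Duality Theorem: one must check that if $X^{(i)}_B$ is a nondegenerate hypersurface then all members of its maximal Gaussian flag are nondegenerate and not cones, so that the Duality Theorem applies verbatim. Nondegeneracy of the osculating scrolls $X^{(j)}_B$ for $j \le i$ is automatic since they sit inside the nondegenerate $X^{(i)}_B$ and the flag is strictly increasing; the not-a-cone condition for the dual side is exactly the nondegeneracy of $X^{(i)}_B$ itself, which is our standing assumption on the variety $X$ (if $X$, equivalently its relevant osculating scroll, were a cone we would factor that out first, as in the standing conventions of \S1). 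Granting this, everything else is a direct unwinding of the two preceding corollaries and \eqref{biduality}, with no further computation.
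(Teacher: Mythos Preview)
Your approach is the right one—apply the Duality Theorem to the Gaussian flag and locate the member with fibre dimension $0$—but the indexing is off by one throughout, and this makes the argument as written incorrect.

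In the base case where $X$ is a hypersurface: the scroll $X_B$ has fibre dimension $N-1-g$ (since $\dim X = N-1$ and $\dim B = g$), so its linear dual $X^\perp_B$ has fibre dimension $N-1-(N-1-g)=g$, \emph{not} $0$. Moreover $X^\perp_B = X^{(0)\perp}_B$ is the \emph{top} member of the dual flag, not the bottom. The member with fibre dimension $0$ is $X^{(1)\perp}_B$: since $X$ is a hypersurface its tangent spaces are hyperplanes, so $X^{(1)}_B$ has fibre dimension $N-1$ and hence $X^{(1)\perp}_B$ has fibre dimension $0$, i.e.\ is (the parametric scroll of) a $g$-dimensional variety $Y\subset\check\P^N$. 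The Duality Theorem, in the form $(X^{(1)\perp}_B)^{(1)}=X^{(0)\perp}_B$, then gives $X^\perp_B = TY$, which is exactly the assertion that $X$ is linearly dual to the tangential scroll of $Y$. Your claimed identity $X_B=(X^\perp_B)^{(1)}=TY$ is wrong both formally (the Duality Theorem does not say this) and dimensionally: $TY$ has dimension $2g$, while $X$ has dimension $N-1$.

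The same shift occurs in the general case: when $X^{(i)}_B$ is the hypersurface, the $0$-fibre-dimensional member of the dual flag is $Y=X^{(i+1)\perp}_B$, not $X^{(i)\perp}_B$, and iterating the duality relation yields $X^\perp_B=(X^{(i+1)\perp}_B)^{(i+1)}=Y^{(i+1)}$, so $X$ is dual to the $(i+1)$-st osculating scroll of $Y$. Your final sentence states the right conclusion, but the chain of identifications leading to it has $X$ where $X^\perp$ should be and $X^{(i)\perp}$ where $X^{(i+1)\perp}$ should be.
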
	
	\begin{example}[Elaboration]
	Let $\bar X\subset\P^N$ be a hypersurface with $g$ -dimensional Gauss image.
	It corresponds to a stationary scroll $X\to B$ with $\dim(B)=g$ and fibre $\P^{N-g-1}$.
	The derived scroll $X^{(1)}\to B$ has fibre $\P^{N-1}$ and 
	corresponds to a map $\phi:B\to\check\P^N$
	and $X$ is dual to the developable scroll $\phi(B)^{(1)}$. Conversely given
	a map $\phi:B\to\check\P^N$ (say generically finite to its nondegenerate image),
	$(\phi(B)^{(1)})^\perp\to\P^N$ will map to a hypersurface with $\dim(B)$-dimensional Gauss image.
	\end{example}
More generally,
\begin{cor}
Let $X$ be a Gaussian scroll of Gauss dimension $g$ in $\P^N$ such that
the $i$-th osculating scroll of $X$ has dimension $g+m<N$. Then $X$ is
linearly dual to the $i$th osculating of a stationary scroll
of dimension $g+N-m-1$ and Gauss dimension $\leq g$.

	\end{cor}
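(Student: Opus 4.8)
The plan is to deduce this from the Duality Theorem and the preceding two corollaries by tracking the osculating dimension sequence through duality. Let $X = X_B$ be a stationary scroll of Gauss dimension $g$ in $\P^N$, and suppose its $i$-th osculating scroll $X^{(i)}_B$ has image of dimension $g + m < N$ in $\P^N$. First I would embed $X_B$ in its maximal (upward) Gaussian flag $X_B \subsetneq X^{(1)}_B \subsetneq \cdots \subsetneq X^{(\l)}_B$, where by hypothesis $i \le \l$ and $n_{i-1} = g+m$ in the notation of the osculating dimension sequence $(n_\bullet)$ introduced just before the Consequences subsection (so that $\dim X^{(i)}_B = g + n_{i-1}$; one must be slightly careful about whether "the $i$-th osculating scroll has dimension $g+m$" refers to its dimension as a scroll or to the dimension of its image — I will take it to mean the image has dimension $g+m$, i.e. $n_{i-1} = m$, which is the geometrically natural reading, and note the indexing convention explicitly).

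Next I would apply the Duality Theorem to the flag, obtaining the linear dual Gaussian flag $(X^{(\bullet)\perp}_B)$ in $\check\P^N$, whose osculating dimension sequence is $(N-1-n_\bullet)$ in reversed order, by the first Corollary in the Consequences subsection. The key point is that the scroll $X_B = X^{(0)}_B$ sits inside this dual flag as an $i$-th osculating scroll of one of its lower members: precisely, $X^{(0)}_B = (X^{(i)\perp}_B)^{(i)}$ up to the identification furnished by iterating \eqref{biduality} (the leading edge of the derived scroll is the original scroll, applied $i$ times). So set $Y_B := X^{(i)\perp}_B$; this is the stationary scroll of which $X_B$ is the $i$-th osculating scroll. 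It remains to compute $\dim Y_B$ and its Gauss dimension. Since $Y_B = X^{(i)\perp}_B$ and $X^{(i)}_B$ has image of dimension $g+m$ in $\P^N$, the fibre $Y_b = (X^{(i)}_b)^\perp$ has projective dimension $N - 1 - (g+m)$; adding the base dimension $g$ (duality does not change the base $B$) gives $\dim Y_B = g + N - m - 1$, as claimed.

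Finally I would check the Gauss dimension bound $\leq g$ for $Y_B$. The Gauss dimension of a stationary scroll is $\dim B$ minus the generic fibre dimension of its own classifying map, equivalently the dimension of the base of its associated maximal Gaussian flag after descending any non-generic-finiteness (cf. the Proposition on non-generically-finite spreading maps in §1). Since $Y_B$ and $X_B$ share the same base $B$ up to birational/generically finite modification, and the Gauss dimension of $X_B$ is $g$, the Gauss dimension of $Y_B$ is at most $\dim B = g$; equality can fail only if the classifying map of $Y_B$ is not generically finite to its image, which is exactly the overfilling-type degeneration allowed by "$\leq g$" rather than "$=g$". This is essentially the same mechanism that appears in Corollary \ref{hypersurface} (the case $m = N-1-g$, $i=0$ or $i$), and the present statement is the natural common generalization.

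The main obstacle I anticipate is purely bookkeeping: pinning down the index conventions so that "$i$-th osculating scroll" on the $X$-side matches "$i$-th osculating scroll" on the $Y$-side under the flag-reversal $\fF(k_\bullet, N) \simeq \fF(N-k_\bullet-1, N)$, and making sure the iterated biduality identity \eqref{biduality} — which as stated handles one step, "the leading edge of $X^{(1)}_B$ is $X_B$" — genuinely telescopes to give $(X^{(i)\perp}_B)^{(i)} = X_B$ for all $i$ in the range, including correctly accounting for where the upward flag of $X_B$ ends (the coindex $\l$) versus where the downward flag of $Y_B$ bottoms out. Once the combinatorics of the dimension sequence are lined up, the dimension count $g + N - m - 1$ and the Gauss dimension inequality are immediate, with no further geometric input beyond the Duality Theorem already in hand.
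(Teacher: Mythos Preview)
Your approach is the intended one --- the paper gives no separate proof, and the corollary is meant to follow directly from the Duality Theorem and the first corollary on osculating dimension sequences --- but there are two slips that keep the argument from going through as written.

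First, the key identity is $(X^{(i)\perp}_B)^{(i)} = X^{\perp}_B$, not $X_B$. The dual Gaussian flag lives in $\check\P^N$, so $X_B\subset\P^N$ cannot sit inside it. What the Duality Theorem gives (via $(X^{(j+1)\perp}_B)^{(1)}=X^{(j)\perp}_B$, iterated) is that $Y^{(i)}=X^\perp_B$ for $Y:=X^{(i)\perp}_B$; hence $X=(Y^{(i)})^\perp$, which is exactly the assertion that $X$ is \emph{linearly dual} to the $i$-th osculating scroll of $Y$. Your version would make $X$ \emph{equal} to that osculating scroll, which is not what the corollary claims.

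Second, your dimension count conflates fibre dimension with total dimension. If ``the $i$-th osculating scroll of $X$ has dimension $g+m$'' is read as $\dim X^{(i)}_B=g+m$, then the \emph{fibre} $X^{(i)}_b$ has projective dimension $m$, not $g+m$. Hence $Y_b=(X^{(i)}_b)^\perp$ has projective dimension $N-1-m$, and $\dim Y_B=g+(N-1-m)=g+N-m-1$. Your line $(N-1-(g+m))+g$ actually evaluates to $N-1-m$, which differs from $g+N-m-1$ by $g$; the asserted equality fails unless $g=0$.

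With those two corrections your argument is complete, and your treatment of the Gauss-dimension bound $\leq g$ is fine.
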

Coming back to Corollary \ref{hypersurface}, a general Gaussian scroll
of Gauss dimension $g$ and dimension $n$ in $\P^N$ may be projected
generically to a Gaussian hypersurface scroll 
$\pi(X)$ of Gauss dimension $g$  in $\P^{n+1}$. Then
that tangent scroll $\pi(X)^{(1)}$ is linearly dual to
a $g$-dimensional subvariety of $\check \P^{n+1}$ and
$\pi(X)^\perp$ is the tangent scroll of the latter.
Since projection is linearly dual to taking linear space section,
we conclude
\begin{cor} 
	If $X\subsetneq\P^N$ is an $n$-dimensional Gaussian scroll of Gauss 
	dimension $g$ then the section of $X^\perp$ by a general
	$(n+1)$-dimensional linear subspace in $\check\P^N$
	is a tangent scroll of a variety of dimension $g$. 
	\end{cor}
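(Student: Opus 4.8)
The plan is to deduce the statement from Corollary~\ref{hypersurface} together with the elementary fact that linear duality interchanges projection from a linear centre $C$ with intersection by its annihilator $C^\perp$. If $X$ is already a hypersurface (i.e. $N=n+1$) there is nothing to prove: then the only $(n+1)$-dimensional linear subspace of $\check\P^N$ is $\check\P^N$ itself and the claim is exactly Corollary~\ref{hypersurface}. So assume $N\geq n+2$. First I would fix a general linear subspace $\Lambda\subset\check\P^N$ with $\dim\Lambda=n+1$ and write $\Lambda=C^\perp$ for the unique $C\subset\P^N$ with $\dim C=N-n-2$; since $L\mapsto L^\perp$ is an isomorphism between the relevant Grassmannians, $C$ is general along with $\Lambda$. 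Let $\pi:\P^N\cdots\to\P^{n+1}$ be the projection away from $C$, and identify $\P^{n+1}$ with $\P^N/C$ and, accordingly, $\check\P^{n+1}$ with $C^\perp=\Lambda$.

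Second I would record the scroll-level identity $\pi(X)^\perp=X^\perp\cap\Lambda$. For general $C$ the centre misses $X$ and each ruling $\P^k_b$ of $X$, and $\pi$ carries $\P^k_b$ isomorphically onto the linear space $\langle\P^k_b,C\rangle/C\subset\P^{n+1}$. A point of $\Lambda=C^\perp$, i.e. a hyperplane $H\subset\P^N$ with $H\supseteq C$, annihilates $\pi(\P^k_b)$ precisely when $H\supseteq\langle\P^k_b,C\rangle$, i.e. when $H\in(\P^k_b)^\perp\cap C^\perp$; hence $\pi(\P^k_b)^\perp=(\P^k_b)^\perp\cap C^\perp$, and taking unions over $b$ gives $\pi(X)^\perp=X^\perp\cap C^\perp=X^\perp\cap\Lambda$. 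This is the precise form of the slogan ``projection is linearly dual to linear section''.

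Third, as in the paragraph immediately preceding this Corollary, for general $C$ the image $\pi(X)\subset\P^{n+1}$ is a stationary hypersurface scroll of Gauss dimension $g$, so Corollary~\ref{hypersurface} applies and identifies $\pi(X)^\perp$ with the tangent scroll $TY$ of the $g$-dimensional variety $Y=(\pi(X)^{(1)})^\perp\subset\check\P^{n+1}$. Combining this with the identity of the previous paragraph gives $X^\perp\cap\Lambda=TY$ with $\dim Y=g$, which is the assertion; as everywhere in the paper this equality is understood birationally, which is all that ``is the tangent scroll'' asserts.

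The one point deserving care, and the only essential use of ``general'', is genericity compatibility: a single general $\Lambda$ (equivalently $C$) must at once (a) keep $C$ disjoint from $X$ and from every ruling $\P^k_b$, (b) make $X\cdots\to\pi(X)$ birational onto a hypersurface, and (c) prevent the Gauss dimension of $\pi(X)$ from dropping below $g$. Conditions (a) and (b) are standard open dense conditions on $C$; for (c) one uses that the Gauss map of $\pi(X)$ factors as $\gamma_X$ followed by a fixed rational map on $\gG(n,N)$ which, for general $C$, is generically finite on the $g$-dimensional Gauss image of $X$, so again this holds on a dense open set of centres. Hence all of (a)--(c) hold for general $\Lambda$, completing the argument.
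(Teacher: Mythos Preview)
Your proof is correct and follows essentially the same route as the paper: project $X$ from a general centre $C$ to a stationary hypersurface $\pi(X)\subset\P^{n+1}$, apply Corollary~\ref{hypersurface} to identify $\pi(X)^\perp$ as a tangent scroll of a $g$-dimensional variety, and then use the duality between projection and linear section to rewrite $\pi(X)^\perp$ as $X^\perp\cap\Lambda$. The paper's argument is the paragraph immediately preceding the Corollary, and your proposal simply fleshes out each of its steps---in particular the fibrewise verification of $\pi(\P^k_b)^\perp=(\P^k_b)^\perp\cap C^\perp$ and the genericity checks (a)--(c)---none of which the paper spells out.
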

That is a sense in which a variety with degenerate Gauss map is
'built up from cones and tangent developables' (per Griffiths and Harris)
(the cones are exactly those $X$ so that $X^{(1)\perp}$ is degenerate). 
\begin{cor}
	Let $X_B$ be a nondegenerate, non-filling Gaussian scroll which is not a cone
	and whose antiderived scroll is nonempty. Then $X$ is a developable scroll.
	\end{cor}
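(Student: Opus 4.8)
The plan is to deduce this corollary directly from the machinery already assembled — specifically the maximal Gaussian flag, the Duality Theorem, and the identity \eqref{biduality}. First I would invoke the construction of antiderived scrolls: since the antiderived scroll $X_B^{(-1)}$ is nonempty, $X_B$ extends downward to $X_B^{(-1)}\supsetneq\cdots\supsetneq X_B^{(-m)}$ with $X_B^{(-m-1)}=\emptyset$, and $m\geq 1$. The key point, recorded in the proof of the Duality Theorem (the equality \eqref{biduality}, i.e. "the leading edge of $X^{(1)}_B$ is $X_B$"), is that each $X_B^{(-i)}$ is itself a stationary scroll whose derived scroll is $X_B^{(-i+1)}$. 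Applying this with $i=1$ gives that $X_B^{(-1)}$ is stationary with $(X_B^{(-1)})^{(1)}=X_B$; that is, $X_B$ is the tangential (developable) scroll of $X_B^{(-1)}$, in the sense of \S\ref{tangential}.

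Concretely, I would argue as follows. The hypotheses — nondegenerate, not a cone, non-filling — are exactly the standing hypotheses under which the Duality Theorem applies, so $X_B^{(-1)}$ inherits these properties as well (nondegeneracy is clear; "not a cone" is equivalent to the bottom of the maximal flag not being a linear space, which is a property of the whole flag; non-filling holds because $X_B^{(-1)}\subsetneq X_B$ which is already non-filling). By the Duality Theorem applied to the Gaussian flag generated by $X_B^{(-1)}$, its derived scroll $(X_B^{(-1)})^{(1)}$ equals $X_B$. Since the derived scroll in the sense of \S\ref{duality} is the same as the tangential/developable scroll $T(\,\cdot\,)$ of \S\ref{tangential}, we conclude $X_B = T(X_B^{(-1)})$, i.e. $X$ is a developable scroll. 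This is precisely the assertion.

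**Main obstacle.** The only real subtlety is the bookkeeping needed to be sure that \eqref{biduality} can be applied "in reverse" — i.e. that $X_B^{(-1)}$ genuinely satisfies the hypotheses (nondegenerate, not a cone) that make its Gaussian flag well-behaved and make the Duality Theorem available to it. Nondegeneracy of $X_B^{(-1)}$ is not automatic from nondegeneracy of $X_B$ and must be checked: one uses that the top member of the maximal Gaussian flag through $X_B$ is filling, so working inside that fixed flag, every member other than the top is nondegenerate by construction; alternatively one replaces $\P^N$ by the linear span of $X_B^{(-1)}$ at the cost of nothing, since the flag identity is local over $B$. The "not a cone" condition transfers because, as noted in the text, $X_B$ is a cone iff the bottom member $X_B^{(-m)}$ of the maximal flag is linear, and this bottom member is shared by $X_B$ and $X_B^{(-1)}$. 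Once these compatibilities are in place, the corollary is immediate from \eqref{biduality}; I expect the write-up to be two or three lines.
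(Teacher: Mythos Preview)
Your proposal is correct and takes essentially the same approach as the paper: the paper's proof observes that $X^\perp_B$ is non-filling and nondegenerate, identifies $X^{(-1)}_B$ with $(X_B^{\perp(1)})^\perp$, and concludes by duality that $X_B=(X^{(-1)}_B)^{(1)}$---which is exactly the reformulation of \eqref{biduality} you invoke. Your discussion of the ``main obstacle'' is more cautious than necessary (the paper checks the hypotheses on the dual side, which is equivalent and slightly quicker), but the substance is identical.
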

\begin{proof}
	Our assumptions imply that the linear dual $X^\perp_B$ is non-filling and 
	nondegenerate. The antiderived $X^{(-1)}_B$ is just the dual of
	$X_B^{\perp(1)}$. Therefore by duality, $X_B=(X^{(-1)}_B)^{(+1)}_B$.
	
	\end{proof}
\begin{cor}
	Let $X_B$ be a nondegenerate Gaussian scroll whose antiderived
	is empty. Then $X_B$ is dual to a filling scroll that is not a cone.
	Conversely the dual to a filling scroll that is not a cone is
	a nondegenerate stationary scroll with empty antiderived.
	\end{cor}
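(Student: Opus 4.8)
The plan is to derive both halves of the statement formally from the Duality Theorem, together with the characterization recalled earlier that a stationary scroll is filling if and only if its linear dual has empty antiderived scroll. Two easy preliminaries are needed. First, a filling scroll $Z$ is automatically stationary: as $\mathrm{char}=0$, surjectivity of the spreading map forces $d_xf$ to be surjective at a general point $x$, whence $\im(d_xf)=T_{f(x)}\P^N$ does not depend on $x$; in particular $Z^\perp$ is defined and, by the Duality Theorem, is again a stationary scroll with $(Z^\perp)^\perp=Z$. Second, linear duality interchanges ``nondegenerate'' and ``not a cone'': a stationary scroll $Z$ is a cone with vertex $p$ exactly when $[p]$ lies in every general fibre of $Z$, and this is exactly the condition that $Z^\perp$ lie in the hyperplane $p^\perp$.

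The one further point to establish is that a stationary scroll $X_B$ with $X_B^{(-1)}=\emptyset$ is never a cone. Indeed, if $X_B$ were a cone with vertex $p$, then $[p]$ lies in every general fibre, so taking the constant section $p$ of the universal subbundle shows that $d_b\ul c^t$ annihilates $p$; hence $[p]\in\P(\ker d_b\ul c^t)=X^{(-1)}_{B,b}$, contradicting emptiness of the antiderived scroll. This is what lets us invoke the Duality Theorem, which requires its Gaussian flag to contain a member that is nondegenerate and not a cone.

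Now the first assertion. Let $X_B$ be nondegenerate and stationary with empty antiderived scroll. By the previous paragraph $X_B$ is not a cone, so the Duality Theorem applies to its (purely ascending) osculating flag and gives that $X_B^\perp$ is a stationary scroll with $(X_B^\perp)^\perp=X_B$. Since $X_B$ is nondegenerate, $X_B^\perp$ is not a cone; and by the filling characterization applied to $X_B^\perp$, that scroll is filling precisely because its linear dual $X_B$ has empty antiderived scroll. Thus $X_B=(X_B^\perp)^\perp$ is the linear dual of the filling, non-cone scroll $X_B^\perp$. For the converse, let $Z$ be a filling scroll that is not a cone: by the first preliminary $Z$ is stationary, and being filling it is nondegenerate, so $Z^\perp$ is a stationary scroll with $(Z^\perp)^\perp=Z$. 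It is nondegenerate because $Z$ is not a cone, and by the filling characterization its antiderived scroll is empty because its dual $(Z^\perp)^\perp=Z$ is filling.

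There is no real obstacle beyond bookkeeping, the substantive content (the Duality Theorem and the biduality identity underlying it) being already available. The two points requiring care are applying the filling characterization in the correct direction — it is the linear \emph{dual} whose antiderived scroll vanishes, not the scroll itself — and verifying the hypotheses of the Duality Theorem, which is exactly why the implication ``empty antiderived scroll $\Rightarrow$ not a cone'' is isolated first.
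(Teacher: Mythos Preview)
The paper gives no explicit proof for this corollary; it is left as an immediate consequence of the duality framework and the characterization ``filling $\Leftrightarrow$ dual has empty antiderived'' stated earlier. Your proof is correct and is exactly the natural unpacking of that omitted argument: you invoke the Duality Theorem together with the filling characterization, and you supply the small auxiliary facts (filling $\Rightarrow$ stationary; nondegenerate $\leftrightarrow$ dual not a cone; empty antiderived $\Rightarrow$ not a cone) needed to check the hypotheses. The last of these is also recorded in the paper in the form ``$X_B$ is a cone iff the bottom member $X_B^{(-m)}$ is a linear subspace'', which when $m=0$ says a cone with empty antiderived would have to be a single linear space, contradicting generic finiteness of the classifying map; your direct argument via $d_b\ul c^t$ is an equally valid route to the same conclusion.
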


\begin{cor}
	A nondegenerate, non-filling Gaussian scroll is dual to a tangent
	developable scroll
	\end{cor}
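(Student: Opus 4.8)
The plan is to read the statement straight off the biduality relation \eqref{biduality}, applied inside the maximal Gaussian flag of $X_B$; no new geometry is needed beyond what Section \ref{duality} already provides.

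First I would record that the derived scroll $X^{(1)}_B$ is available. Since $X_B$ is non-filling the spreading map $f$ is not surjective, so $f(X_B)$ is a proper subvariety of $\P^N$ and at a general point $x$ of a general fibre the embedded tangent space $\tilde T_{f(x)}f(X_B)$ is a proper linear subspace of $\P^N$; hence $X^{(1)}_B$ is a well-defined stationary scroll, and it is nondegenerate because it properly contains the nondegenerate scroll $X_B$. Set $Z_B := X^{(1)\perp}_B = (X^{(1)}_B)^\perp$, a stationary scroll in $\check\P^N$. Note that $Z_B$ is non-filling: by the Duality Theorem the antiderived scroll of $X^{(1)}_B$ is $X_B$, which is nonempty, and a scroll is filling iff its dual has empty antiderived; so the derived, a.k.a.\ tangent developable, scroll $TZ_B$ of $Z_B$ is defined. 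The heart of the argument is then the identity \eqref{biduality} with $Y_B = X_B$, namely
\[
TZ_B = (X^{(1)\perp}_B)^{(1)} = X^\perp_B ,
\]
i.e.\ $X^\perp_B$ is the tangent developable scroll of $Z_B$. Hence $X_B = (TZ_B)^\perp$ is linearly dual to the tangent developable scroll $TZ_B$, which is the assertion.

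The one point that needs care is the legitimacy of invoking \eqref{biduality} here, since the Duality Theorem is proved under the blanket hypothesis that the members of the flag are nondegenerate and not cones. I would handle this by observing that $X_B$ being a cone is equivalent to $Z_B = X^{(1)\perp}_B$ being degenerate, and that the property of being a cone propagates up and down the maximal flag (all its members share the same bottom member, which is a linear space precisely when any one of them is a cone); so the extra hypothesis is either subsumed in the paper's standing convention that $X$ is not a cone, or else --- in the cone case --- the argument above still applies and $TZ_B$ is a (degenerate) tangent developable scroll all the same. I expect this bookkeeping around the non-cone hypothesis, rather than anything in the main line of argument, to be the only real obstacle; the rest is an immediate unwinding of the definitions of derived scroll, antiderived scroll, and linear dual flag from Section \ref{duality}.
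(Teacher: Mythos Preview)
Your proof is correct and follows essentially the same route as the paper's: both identify $X^\perp_B$ as the derived (tangent developable) scroll of $Z_B = X^{(1)\perp}_B$ via the biduality relation \eqref{biduality} with $Y_B = X_B$. Your version is simply more explicit in verifying the side conditions (non-filling of $Z_B$, the non-cone bookkeeping) that the paper's one-line proof leaves implicit.
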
 
\begin{proof}
	We may assume our nondegenerate non-filling Gaussian scroll $X_B$
	is not a cone in which case $X^\perp_B$ is nondegenerate,
	not a cone and has nonempty antiderived (namely $X_B^{(1)\perp}$);
the derived  scroll of the latter is $X_B^\perp$ itself.
	\end{proof}
Iterating the conclusion of the last Corollary, we conclude:
\begin{cor}
	A  Gaussian scroll that is not a cone is for some $i\geq 0$
	the $i$-th osculating scroll to the dual of a filling scroll. 
	\end{cor}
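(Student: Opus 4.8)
The plan is to run the downward extension of the given scroll to the bottom of its maximal Gaussian flag and then invoke the Duality Theorem once more --- precisely the mechanism sketched in the introduction. I would keep the standing assumptions that $X_B$ is nondegenerate (so $g=\dim B>0$, the case $X_B=\P^N$ being trivial) and, by hypothesis, not a cone. First I would form the descending part of the maximal Gaussian flag by iterating the antiderived (leading-edge) construction of \S\ref{duality}: $X_B=X_B^{(0)}\supsetneq X_B^{(-1)}\supsetneq\cdots\supsetneq X_B^{(-m)}$ with $X_B^{(-m-1)}=\emptyset$, where $m\geq 0$ is the osculating index. This terminates after finitely many steps, since $d_b\ul c\neq 0$ at a general $b$ (the classifying map being generically finite to its image), so the fibre of $X_B^{(-j-1)}$ is a proper subspace of that of $X_B^{(-j)}$. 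Set $X_0:=X_B^{(-m)}$. By the remark in \S\ref{duality} that a scroll is a cone exactly when its bottom member is a linear subspace, the non-cone hypothesis tells us $X_0$ is not a linear subspace --- the one place that hypothesis enters.

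Next I would show that $X_0^\perp$ is a filling scroll. As $X_0$ is the bottom member, its antiderived scroll is empty; applying the characterization recalled earlier that a scroll $W$ is filling if and only if $W^\perp$ has empty antiderived scroll, taken with $W=X_0^\perp$ so that $W^\perp=X_0^{\perp\perp}=X_0$, we conclude $X_0^\perp$ is filling. (Here the fact that $X_0$ is not a linear subspace is what keeps $X_0^\perp$ from being merely a non-filling proper linear subspace, which is how the cone case gets excluded.)

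It then remains to identify $X_B$ as an osculating scroll of $X_0$. The Duality Theorem contains the identity $(Y_B^{(-1)})^{(1)}=Y_B$ for every stationary scroll $Y_B$ with nonempty antiderived scroll (the derived scroll of the leading edge of $Y_B$ is $Y_B$). Applying this successively to $Y_B=X_B^{(-m+1)},X_B^{(-m+2)},\dots,X_B^{(0)}=X_B$ yields $X_0^{(t)}=X_B^{(-m+t)}$ for $1\leq t\leq m$, hence $X_0^{(m)}=X_B$. Writing $i:=m\geq 0$ and using $X_0=(X_0^\perp)^\perp$, this exhibits $X_B=(X_0^\perp)^{\perp(i)}$ as the $i$-th osculating scroll of the dual of the filling scroll $X_0^\perp$, which is the assertion. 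The case $i=0$ is included: then $X_B$ already has empty antiderived scroll, $X_B^\perp$ is itself filling, and $X_B=(X_B^\perp)^\perp$.

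I do not expect any essentially new idea to be required: the proof is bookkeeping around the maximal flag, and the only point to watch is the interplay of the non-cone hypothesis with the ``filling $\Leftrightarrow$ dual has empty antiderived'' characterization, which is exactly what rules out the degenerate cone-type cases. Equivalently, the whole argument can be run dually: replace $X_B$ by $X_B^\perp$, build its upward osculating flag up to the filling top member, and dualize the flag back, obtaining the same statement read through the Duality Theorem.
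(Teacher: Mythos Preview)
Your argument is correct and is essentially the same as the paper's: the paper records only ``Iterating the conclusion of the last Corollary,'' but the mechanism this points to---spelled out in the introduction---is exactly your downward run along the maximal Gaussian flag to the bottom member $X_0=X_B^{(-m)}$, the observation that $X_0^\perp$ is filling (equivalently, $X_0$ has empty antiderived), and the identification $X_B=X_0^{(m)}$ via the Duality Theorem's $(Y_B^{(-1)})^{(1)}=Y_B$. Your closing remark about the dual (upward) reading is also on point and matches how the paper's one-line iteration can be read.
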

This yields a way to construct all stationary scrolls: start with a filling scroll,
dualize, then take some osculating scroll.
		
\begin{example}
	In $\P^5=\P(\Sym^2(\C^3))$, i.e. the space of conics in $\P^2$,
	 let $X$ be the Segre cubic primal
	consisting of all reducibles (line-pairs). It has Gauss dimension 2,
	and is linearly dual to the tangent scroll of the Veronese
	(2-uple embedding of $\P^2$). $X$ is also the tangent scroll
	of the dual Veronese of double lines. As in Example \ref{d-upple}, 
	the tangent scroll to $X$ is dual to the Veronese itself.
	\end{example}
\begin{example}
	Consider the example appearing in \cite{akivis-gauss}, IIIB, Class 1a.
	The is a 3-fold in $\P^4$ of Gauss dimension 2 that is a union of
	tangent lines in (one of two) eigendirections for the second fundamental form
	of a general surface $S$
	(assuming these eigendirections are distinct). Thus
	$X$ is a $\P^1$-bundle over a surface $B$ which can be locally identified with $S$.
	$X^{(1)}_B$ is a $\P^3$-bundle which fills up $\P^4$ and corresponds to a surface
	$X^{{(1)}\perp}_B\to\check\P^4$. The tangent developable of this surface
	is just $X^\perp_B$, a $\P^2$-bundle which surjects to $\check\P^4$.
	The dual to $X_B^{\perp(1)}$ is empty.
	\end{example}
\begin{rem}
	The dual to the 'inflation' construction of \ref{inflation}
	 may be called a 'deflation'. Thus, for a stationary scroll of fibre
	 dimension $k$, a deflation, i.e.
	 subscroll of codimension $\l\leq k$ is a staionary scroll
	 of fibre dimension $k-\l$ (which may be 0, so not correspond to a variety
	 with degenerare Gauss map, as noted in \cite{akivis-gauss}).
	\end{rem}
%	\begin{rem}
%	Given a Gaussian scroll $X\to B$ with nonempty antiderived scroll $X^{(-1)}$,
%	one always has $X^{(-1)(1)}\subset X$, but the inclusion may be strict
%	even if $B$ is 1-dimensional
%	(see Proposition \ref{1dim} below) . However one always has $X^{(-1)(i)}=X$ for some $i $.
%	This follows from the fact that for any stationary scroll $Y$, $Y\subsetneq Y^{(1)}$ unless $Y$ is filling.
%	\end{rem}
%	

	\section{Varieties with small Gauss dimension}\label{small}

\begin{lem}
Suppose given the following:\par
 $U, V$,  finite-dimensional vector spaces,\par
 $L$, a line bundle on an irreducible projective variety $X$,\par
$i:U\otimes\O_X\to V\otimes L$ an injection
whose degeneracy locus
is a hypersurface numerically equivalent to $rL+A$ where $r=dim(U)$
and $A$ is nef.\par

Then $A$ is numerically trivial and
the saturation of	of $\im(i)$ is a constant subbundle, i.e.
has the form $V_0\otimes L$ for some subspace $V_0\subset V$.
Conversely if  $i$ is an injection
such that the saturation of $\im(i)$ 	is a constant subbundle
then the degeneracy locus of $i$ is numerically equivalent to $\dim(U)L$.
	\end{lem}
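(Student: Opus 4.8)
The plan is to analyze the injection $i: U\otimes\O_F\to V\otimes L$ via its induced map on top exterior powers, which converts the rank condition into a statement about a single section of a line bundle. Set $r=\dim U$. Taking $\Lambda^r$ of $i$ gives a nonzero map $\Lambda^r U\otimes\O_F\to \Lambda^r(V\otimes L)$; since $\Lambda^r U$ is one-dimensional, after trivializing it this is a section $s$ of the bundle $\Lambda^r V\otimes L^{\otimes r}$ whose zero locus (scheme-theoretically) is precisely the degeneracy locus where $i$ drops rank. The hypothesis says this zero locus is a hypersurface $D$ with $[D]\equiv rL+A$, $A$ nef.

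First I would reduce to understanding the line bundle generated by $s$. Composing $s$ with a general linear functional $\lambda$ on $\Lambda^r V$ produces, for each $\lambda$, a section $\lambda\circ s$ of $L^{\otimes r}$ vanishing on $D$ (plus possibly more); comparing divisor classes, $\lambda\circ s$ is a section of $L^{\otimes r}$ whose divisor is numerically $\geq D\equiv rL+A$. But $L^{\otimes r}$ has numerical class $rL$, so the effective divisor of $\lambda\circ s$ has class $rL$ and simultaneously dominates $rL+A$ with $A$ nef; this forces $A$ to be numerically trivial. Moreover each $\lambda\circ s$ must vanish exactly on $D$ and nowhere else — so the image of $s$, viewed pointwise, lands in a single line of $\Lambda^r V$ away from $D$, and hence (by irreducibility of $F$ and continuity/closedness) in a fixed one-dimensional subspace $\Lambda^r V_0\subset\Lambda^r V$ everywhere. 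A decomposable vector spanning a line in $\Lambda^r V$ determines an $r$-dimensional subspace $V_0\subset V$; the pointwise image of the decomposable map $\Lambda^r i$ being constant then says that the saturation of $\im(i)$ is contained in $V_0\otimes L$, and since it has rank $r=\dim V_0$ and is saturated it equals $V_0\otimes L$.

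The main obstacle I anticipate is the step from "each generic linear functional $\lambda\circ s$ cuts out exactly $D$" to "$s$ itself takes values in a fixed line $\Lambda^r V_0$". One must rule out the image line of $s$ genuinely varying over $F$ while each coordinate still vanishes on $D$; the point is that $s/(\text{local equation of }D)$ extends to a nowhere-vanishing section of $\Lambda^r V\otimes L^{\otimes r}(-D)\equiv\Lambda^r V\otimes\O_F(-A)$, and on a projective variety with $A$ numerically trivial a nowhere-vanishing section of a trivial-determinant bundle twist need not be constant in general — so one really uses that $F$ is projective and the target of the \emph{reduced} section $\bar s$ is $\Lambda^r V\otimes M$ with $M\equiv\O_F$ so that $H^0(F,M)$ consists of constants (or, if $M$ is merely numerically trivial but nontrivial, that $\bar s$ still has constant image line because it trivializes $M$ and $\Lambda^r V$ is a fixed vector space). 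I would phrase this cleanly by noting $\bar s\in H^0(F,\Lambda^r V\otimes M)=\Lambda^r V\otimes H^0(F,M)$, and $H^0(F,M)\neq 0$ with $M$ numerically trivial on a projective variety forces $M\cong\O_F$, so $\bar s$ is a constant vector in $\Lambda^r V$, which is necessarily decomposable (being a limit/specialization of the decomposable values of $\Lambda^r i$ at points off $D$), yielding $V_0$.

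For the converse, suppose the saturation of $\im(i)$ is $V_0\otimes L$ with $\dim V_0=r$. Then $i$ factors as $U\otimes\O_F\xrightarrow{j} V_0\otimes L\hookrightarrow V\otimes L$ where $j$ is an injection of bundles of the same rank $r$; the degeneracy locus of $i$ equals that of $j$, which is the vanishing of $\det(j)\in H^0(F,\det(V_0\otimes L)\otimes\det(U\otimes\O_F)^{\vee})$, a line bundle numerically equivalent to $rL$. Hence the degeneracy locus is numerically $rL=\dim(U)L$, as claimed. This direction is essentially a determinant computation and presents no real difficulty.
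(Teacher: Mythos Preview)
Your argument is correct and rests on the same core idea as the paper's: pass to $\Lambda^r i$ so that the degeneracy locus becomes the zero scheme of a single section, then compare its class $rL+A$ against the class $rL$ carried by $L^{\otimes r}$ to force $A\equiv 0$ and pin down a fixed $r$-plane $V_0$. The packaging, however, differs. The paper first cuts $F$ down to a curve and normalizes (noting that ``saturation equals $V_0\otimes L$'' is a generic, hence curve-testable, condition), then writes the saturation as $E\otimes L$ for a genuine subbundle $E\subset V\otimes\O_F$, interprets $E$ via the classifying map $f:F\to G(r,V)$, and reads off $[D]=\det(E)+rL$, so $\det(E)\equiv A$; since $-\det(E)=f^*(\text{Pl\"ucker})$ is nef, $A$ is both nef and anti-nef, hence trivial, and $f$ is constant. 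You instead stay on $F$, obtain $-A$ \emph{effective} (rather than nef) from $\operatorname{div}(\lambda\circ s)\geq D$, and recover $V_0$ by dividing $s$ through by a local equation of $D$ to get a constant, decomposable vector in $\Lambda^r V$. Your route is a touch more elementary in that it avoids the Grassmannian; the paper's reduction to a smooth curve has the mild advantage that it sidesteps any worry about whether $D$ is Cartier on a possibly singular $F$ (a hypothesis you use implicitly when forming $L^{\otimes r}(-D)$), though in the intended application $F=\P^k$ this is moot. The converse direction is handled identically in both.
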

\begin{proof}
Note to begin with that a rank-$r$ 
subsheaf of $V\otimes L$ that is generically
contained in $V_0\otimes L$ where $\dim(V_0)=r$ saturates to $V_0\otimes L$.
Consequently we may cut $X$ down and assume it is a curve,
then normalize and assume it is nonsingular. Then the saturation
of $\im(i)$ has the form $E\otimes L$ where $E$ is a rank-$r$
subbundle of the trivial bundle $V\otimes \O_X$. Now $E$ corresponds to
a morphism $f$ of $X$ to the Grassmannian $G(r, V)$
and $-\det(E)$ is the pullback of the ample Pl\"ucker line bundle,
while the drop-rank locus of $i$, i.e. the zero-locus of
$\wedge^ri$, must be numerically equivalent to
$\det(E\otimes L)=\det(E)+rL$. 
Thus \[\det(E)+rL\equiv_{\mathrm{num}}rL+A,\]
i.e. $\det(E)$ is nef.
Consequently $\det(E)$ is numerically trivial
 so $f$ is constant and $E\otimes L=V_0\otimes L$ as claimed.
 The converse is trivial.
	\end{proof}
Classically, the \emph{focal locus} of a scroll  (originally due to Kummer \cite{kummer}; see also
\cite{semple-roth}, \cite{arrondo-focal}, \cite{depoi-1dimfocal})  
$\bigcup\limits_{b\in B}\P^k_b$ is the union of the intersections of 
fibres $\P^k_b$ with their 'consecutive' , i.e. the ramification locus of
the natural map
\[\coprod\limits_{b\in B}\P^k_b\to\bigcup\limits_{b\in B}\P^k_b,\]
or, in the  notation of \eqref{nu}, the projectivization of the
inverse image under $\nu$ of the discriminant hypersurface in
$\Hom(T_bB, N)$. More formally, the derivative of the classifying map
of the scroll is a map
\[T_bB\to\Hom(U, V/U)\]
where $U, V$ are the vector spaces corresponding to $\P^k_b$ resp. $\P^N$. Transposing,
we get a map
\[\psi:U\to\Hom(T_bB, V/U).\]
The focal locus is just the closure of the
projectivization $\P(\ker(\psi))$, spread out as a projective bundle
over some open subset of $B$.\par
Then the Lemma yields the following characterization of stationary scrools
among all scrolls in terms of the size of the focal locus:
\begin{cor}
	For a scroll of fibre dimension $k$,
	if the focal locus meets each fibre in
	a hypersurface then the degree of the hypersurface is
	at most $k$ with equality iff the scroll is stationary.
	\end{cor}
	\begin{thm}\label{lowdim}
	Let $X$ be a stationary scroll of dimension $n$ and Gauss dimension
	$g$ with $g(g+1)\leq n$. Then $X$ is an inflated cone or 
	inflated tangent scroll.
	\end{thm}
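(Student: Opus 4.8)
The plan is to run the duality machinery of Section~\ref{duality}. Pass to the Gauss map scroll $X/B$ of $X$, so that $\dim B=g$ and the fibres are the Gauss fibres $\P^k$ with $k=n-g$; the numerical hypothesis reads $g^2\le k$. We may assume $X$ is nondegenerate (replace $\P^N$ by the span of $X$) and not a cone, since a cone is already one of the allowed conclusions; and $X$ is non-filling because $n<N$.

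The first --- and essentially only --- point at which the hypothesis enters is to show that the antiderived scroll $X^{(-1)}_B$ is nonempty. By the description of $X^{(-1)}_B$ recalled in Section~\ref{duality} (cf. \eqref{edge}), the fibre $X^{(-1)}_b$ over a general $b$ is the projectivization of the kernel of the transpose derivative $d_b\ul c^t\colon \ul c(b)\to\Hom(T_bB,\C^{N+1}/\ul c(b))$. By stationarity this map factors through $\Hom(T_bB,W_b)$, where $W_b=\tilde{\ul c}(b)/\ul c(b)$ is the space of ``new'' directions of the derived scroll along $X_b$, $\tilde{\ul c}(b)$ being the $(n+1)$-dimensional subspace defining $X^{(1)}_b$; thus $\dim W_b=n-k=g$. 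Hence we have a linear map from the $(k+1)$-dimensional space $\ul c(b)$ to the $g^2$-dimensional space $\Hom(T_bB,W_b)$, and since $k+1>g^2$ it has nonzero kernel. Therefore $X^{(-1)}_B\ne\emptyset$.

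Next I would invoke the consequences of the Duality Theorem: a nondegenerate, non-filling stationary scroll that is not a cone and whose antiderived scroll is nonempty is a developable (tangent) scroll --- this is one of the Corollaries in Section~\ref{duality}. Concretely $X_B=(X^{(-1)}_B)^{(1)}$, i.e. $X$ is the relative tangent developable scroll of the stationary scroll $Y:=X^{(-1)}_B$. It then remains to recognise this as an inflated tangent scroll. If the classifying map of $Y$ is generically finite to its image, then $Y$ is birationally an honest subvariety of $\P^N$ and $X=TY$ is literally a tangent developable scroll. In general one replaces $B$ by the image of the classifying map of $Y$, descends $Y$ to an honest variety $\overline Y$ there, and checks --- via the construction of Section~\ref{inflation} --- that the original $X/B$ is obtained from the tangent developable $T\overline Y$ by inflating its fibres (which only enlarges the base when one re-examines the Gauss structure). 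Either way $X$ is a cone or an inflated tangent scroll.

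The main obstacle is the nonemptiness of $X^{(-1)}_B$, and specifically the input that stationarity collapses the target of the relevant derivative from the a priori dimension $g(N-k)$ down to $g^2=\dim\Hom(T_bB,W_b)$; granting that, the bound $g^2\le k$ closes this step at once. The remaining, more bookkeeping than conceptual, care goes into the last paragraph, matching the abstract conclusion $X=(X^{(-1)}_B)^{(1)}$ with the stated normal form when the classifying map of the edge scroll $X^{(-1)}_B$ fails to be generically finite.
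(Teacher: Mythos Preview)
Your core computation is exactly the paper's: the normal map $\nu:T_bB\otimes\O_L\to N_{L/\P^N}$ on a general fibre $L=\P^{n-g}$ factors (by stationarity) through a $g$-dimensional constant subbundle, hence amounts to $g^2$ linear forms on $L$; since $g^2\le n-g$ these have a common zero. In your language this is the nonemptiness of $X^{(-1)}_b$, and your dimension count $k+1>g^2$ is literally the same inequality.

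The difference is in how the conclusion is drawn. The paper is more direct: it simply picks a \emph{section} $p(t)$ through a zero of $\nu$ and observes that $\nu(p(t))=0$ says precisely $\partial p/\partial t_i\in L$ for all $i$, so $\tilde T_{p}S\subset L$ and $X$ is an inflation of the tangent scroll $TS$ (or a cone if $p$ is constant). No duality is invoked. You instead appeal to the corollary $X_B=(X^{(-1)}_B)^{(1)}$ and then must translate ``derived scroll of the edge scroll'' back into ``inflated tangent scroll''. Your descent argument for this last step (replace $B$ by the image of the classifying map of $Y=X^{(-1)}_B$, etc.) is vaguer than it needs to be; the clean fix is just to choose a general section $S$ of $Y\to B$, after which $TS\subset TY=X$ fibrewise and you are done --- which is exactly the paper's move. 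So your route is correct but takes a detour through the duality theorem where the paper stays elementary; the payoff of your phrasing is that it makes explicit that the numerical hypothesis is precisely the condition $X^{(-1)}_B\neq\emptyset$.
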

	\begin{proof}
	Let $L=\P^{n-g}_b$ be a general fibre of the Gauss map. The normal map
	\[\nu:T_bB\otimes \O_L\to N_{L/\P^N}\]
	factors through a constant subbundle $V\otimes\O_L(1)$
	where $\dim(V)=g$. Hence $\nu$ can be viewed as a 
	$g^2$-tuple of sections of $\O_L(1)$. 
	Since $g^2\leq n-g$ by assumption, there
	is a point $p\in L$ where $\nu$ vanishes. Viewing $p=p(t)$
	as describing a section $S$ over $B$ as t varies, 
	we get that $\del p/\del t_i\in L, \forall i$
	where the $t_i$ are local coordinates on $B$
	(the partials are well defined $\mod p$). This means $L$
	contains $\tilde T_pS$, so $X$ is an inflated cone 
	(if $p(t)$ is constant) or tangential variety (otherwise).
	\end{proof}

\begin{cor}
	Let $X_B$ be a nondegenerate stationary scroll of Gauss dimension $g$
	and codimension $c$ in $\P^N$, such that $g(g-1)<c$. 
	Then $X_B$ is a subscroll of the linear dual to a tangent
	scroll of a variety of dimension $g$ or less.
	\end{cor}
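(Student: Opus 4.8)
The plan is to derive this by applying Theorem \ref{lowdim} to the linear dual scroll. Write $n=\dim X$, $c=N-n$, and let $X/B$ be the Gauss map scroll, so $\dim B=g$ and, by Proposition \ref{gauss-fibre-prop}, the general fibre of $X/B$ is a linear $\P^{\,n-g}\subset\P^N$. First I would dispose of the case that $X$ is a cone: splitting off the vertex produces a nondegenerate, non-cone variety $X_0$ of the same Gauss dimension $g$ and the same codimension $c$ in a smaller projective space, to which the argument below applies, and coning the resulting conclusion back up recovers the statement for $X$ (the linear dual of a degenerate tangent scroll being a cone of the required shape). Granting that $X$ is not a cone, the Gaussian flag generated by $X/B$ satisfies the hypotheses of the Duality Theorem, so the linear dual $X^\perp/B$ is again a stationary scroll; it lies in $\check\P^N$ with general fibre $(\P^{\,n-g})^\perp\cong\P^{\,c+g-1}$, hence $\dim X^\perp=g+(c+g-1)=c+2g-1$. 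Being a scroll over the $g$-dimensional base $B$, its Gauss dimension $h$ satisfies $h\le g$.

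Next I would verify that Theorem \ref{lowdim} applies to $X^\perp$. The hypothesis $g(g-1)<c$ is the same as $g^2-g+1\le c$, i.e.\ as $g(g+1)=g^2+g\le c+2g-1=\dim X^\perp$; since $h\le g$, this yields $h(h+1)\le g(g+1)\le\dim X^\perp$, which is precisely the numerical hypothesis of Theorem \ref{lowdim} for the variety $X^\perp$ of Gauss dimension $h$. Hence $X^\perp$ is a cone or an inflated tangent scroll. It is not a cone: were $X^\perp\subset\check\P^N$ a cone with vertex the point $[H]$ dual to a hyperplane $H\subset\P^N$, then $[H]$ would lie on every fibre $(\P^{\,n-g}_b)^\perp$ of $X^\perp/B$, i.e.\ $H\supseteq\P^{\,n-g}_b$ for all $b$, whence $H\supseteq\bigcup_b\P^{\,n-g}_b=X$, contradicting nondegeneracy. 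So $X^\perp$ is an inflated tangent scroll, and --- reading off the proof of Theorem \ref{lowdim}, where the inflated scroll is the tangent scroll $TS$ of the variety $S$ traced out by the section on which the normal map vanishes --- we get $TS\subseteq X^\perp$ fibrewise over a suitable base, with $\dim S\le h\le g$.

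Finally, dualizing the inclusion $TS\subseteq X^\perp$ fibrewise reverses it: $X=(X^\perp)^\perp\subseteq(TS)^\perp$, so the Gauss map scroll $X/B$ is a subscroll of $(TS)^\perp$, the linear dual of the tangent scroll of a variety $S$ of dimension at most $g$. This is the assertion.

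The step I expect to be the main obstacle is the clean interface with the duality formalism, in two respects. One must make sure $X^\perp/B$ is a bona fide proper stationary scroll to which Theorem \ref{lowdim} literally applies: when $n\ge 2g$ one has $\dim X^\perp=c+2g-1<N$, so $X^\perp$ has positive codimension and there is nothing to check; in the narrow remaining range $g\le n\le 2g-1$, where $X^\perp/B$ may be filling --- equivalently $X/B$ has empty antiderived scroll --- one must instead use the structure of scrolls with empty antiderived (dual to a filling scroll, by the corollaries proved above) together with the descent of non-generically-finite scrolls to reach the same conclusion. Secondly, one should check carefully that the property ``inflated tangent scroll'' really passes through linear duality to ``subscroll of the linear dual of a tangent scroll,'' with the dimension of the base variety $S$ correctly controlled by $g$.
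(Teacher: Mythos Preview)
Your proof is correct and follows essentially the same route as the paper: compute $\dim X^\perp=2g+c-1$, check that $g(g-1)<c$ is exactly the numerical hypothesis $g(g+1)\le\dim X^\perp$ of Theorem \ref{lowdim}, rule out $X^\perp$ being a cone via nondegeneracy of $X$, and dualize the resulting inflated-tangent-scroll structure. The paper's proof is considerably terser and does not spell out the edge cases you flag (the cone case for $X$, or $X^\perp$ filling), but the core argument is identical.
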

\begin{proof}
	Note that if the scroll $X/B\to\P^N$ has fibre dimension $n$ 
	and base dimension $g$, hence fibre dimension $n-g$,
	then $\X^\perp$ has dimension $g+(N-1-n+g)=2g+c-1$.
	Then our conclusion follows by 
	duality from Theorem \ref{lowdim} by noting
	that the linear dual of a nondegenerate variety cannot be a cone. 
	\end{proof}
In case $g=1$, Theorem \ref{lowdim} can be easily sharpened, 
a well-known result due to
Griffiths and Harris \cite{griffiths-harris}. Actually varieties of Gauss dimension
$g=2$ are also classified by results of Piontkowski \cite{piontkowski}.
\begin{prop}[Griffiths-Harris]\label{1dim}
	A variety with 1-dimensional Gauss image is  an osculating variety
	to a curve or a cone over such.
	\end{prop}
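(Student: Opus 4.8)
The plan is to reduce to the case of a smooth curve base and then integrate. Let $X \subset \P^N$ have $1$-dimensional Gauss image, and let $B$ be the (smooth, after normalization) curve parametrizing the general fibres, so $X/B$ is a stationary scroll with fibre a general linear space $L = \P^{n-1}_b$ and $\dim(B) = 1$. The classifying map is $\ul c : B \to \gG(n,N)$, or in the flag-variety language we have a Gaussian flag $X_B \subsetneq X^{(1)}_B \subsetneq \dots \subsetneq X^{(\l)}_B$. Since $B$ is a curve, at a general point $b$ we may pick a local coordinate $t$ and a lifting of $\ul c(b)$ to a moving $(n+1)$-frame spanning the fibre $\tilde T_b$; differentiating repeatedly in $t$ produces the osculating spaces. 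The key point is that because $\dim(B) = 1$, each osculating step raises the fibre dimension by at most one, so the osculating dimension sequence $(n_\bullet)$ increases by $1$ at each stage until it fills $\P^N$, and dually the antiderived (co-osculating) flag also steps down by one each time.

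First I would establish that the bottom member $X^{(-m)}_B$ of the maximal Gaussian flag extending $X_B$ is either a point-section (a single moving point $p(t) \in \P^N$) or a fixed linear space. Indeed, the antiderived scroll $X^{(-1)}_B$ has fibre $L_b \cap \bigcap_i \partial L_b / \partial t_i$, which for $\dim(B)=1$ is $L_b$ cut by one derivative condition, hence of fibre dimension at least $\dim(L_b) - 1$; iterating, the flag descends by exactly one in fibre dimension at each step (using that $\ul c$ is generically finite, so strict descent until we hit fibre dimension $0$ or a cone situation). So after finitely many steps we reach a scroll over $B$ of fibre dimension $0$, i.e. the image of a map $\phi : B \to \P^N$ — a moving point $p(t)$ — unless at some stage the fibre stabilizes, which by the strictness just noted can only happen if it has become a \emph{constant} linear space $\Lambda$, in which case every member of the flag is a cone with vertex $\Lambda$ and $X$ is a cone, as claimed.

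In the non-cone case, having the moving point $p(t)$ at the bottom, I would invoke the Duality Theorem, or more directly the equality $(Y_B^{(-1)})^{(+1)} = Y_B$ proved there (the leading-edge-of-the-derived-scroll identity): since each $X^{(-i)}_B$ is stationary with derived scroll $X^{(-i+1)}_B$, we get that $X^{(-m+1)}_B$ is the tangent developable $T\phi(B)$ of the curve $\phi(B)$, that $X^{(-m+2)}_B$ is the second osculating scroll $T^{(2)}\phi(B)$, and inductively $X_B = X^{(0)}_B = T^{(m)}\phi(B)$, the $m$-th osculating variety to the curve $\phi(B)$. This is exactly the assertion. The only bookkeeping is that $\phi(B)$ might itself be degenerate or a point: if $\phi(B)$ is a point the flag is constant and $X$ is a cone (again the excluded case), and if $\phi(B)$ spans a proper linear subspace then $X$ lies in that subspace and we replace $\P^N$ by it; either way the conclusion holds.

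The main obstacle is the strict-descent claim for the antiderived flag — i.e. verifying that for a curve base the fibre dimension of $X^{(-1)}_B$ is exactly one less than that of $X_B$ unless $X$ is a cone, and that this persists under iteration so the flag genuinely terminates at a moving point or a fixed space. Concretely this is the statement that the derivative map $d_b\ul c^t : S \to \Hom(T_bB, Q)$ on the curve $B$ has kernel of corank exactly $1$ generically; corank $0$ would contradict generic finiteness of $\ul c$, while corank $\geq 2$ would say the second fundamental form has rank $\geq 2$ in the one available tangent direction, impossible for a scalar (after the generic projection to $\P^{n+1}$ used throughout) second fundamental form restricted to a $1$-dimensional space — so this is where one pins down that $g = 1$ forces the curve picture rather than, say, a surface. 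Once that is in hand the rest is the formal unwinding of duality just sketched, with the cone alternative absorbing all the degenerate sub-cases.
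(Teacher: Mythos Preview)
Your approach is essentially the paper's, just reorganized: both descend through the antiderived (leading-edge) scroll $X^{(-1)}$—which the paper calls $X_0$, the zero-locus of the normal map $\nu$—and use that $(X^{(-1)})^{(1)}=X$ to climb back. The paper packages this as an induction on $n=\dim X$ and proves the key identity $\tilde T_{X_0,x}=L$ by hand in one line (since $x\in L_0$ means exactly that $\partial_t x$ has zero normal component, i.e.\ $\partial_t x\in L$), whereas you unroll the whole flag and cite the Duality Theorem. Both are valid; the paper's direct argument is lighter because it avoids the second-fundamental-form machinery inside the Duality Theorem.

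Your last paragraph, however, muddles the justification for the strict-descent claim. The statement that $d_b\ul c^t:S\to Q$ has kernel of corank exactly $1$ is correct, but the reason is simply \emph{stationarity}: for $g=1$ the derived fibre $X^{(1)}_b$ has dimension $k+1$, so the image of $\phi=d_b\ul c(\partial_t)$ in $Q$ is exactly one-dimensional, hence $\ker\phi$ has dimension $k$. Your argument via the second fundamental form after generic projection to $\P^{n+1}$ does not work as stated—projection makes $Q$ one-dimensional, so rank $\leq 1$ after projection gives no bound on the rank in $\P^N$. Replace that paragraph with the stationarity observation (which you already need at each stage of the iteration, and which you correctly source from the Duality Theorem), and the argument is complete.
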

\begin{proof} The proof is by induction on $n$.
	Using notation as above, the zero-set of $\nu$ is a hyperplane
	$L_0\subset L=\P^{n-1}_b$. 
	If $L_0$ is fixed independent of $b\in B$ then our variety $X$ is a cone with
	vertex $L_0=\P^{n-2}$ and our assertion holds, so assume this is not the case.
	Let $X_0$ be the $n-1$-fold swept out
	by these hyperplanes as $b$ varies. Then for $x\in L_0$ general,
	clearly $\tilde T_{X_0, x}=L$ so $X_0$ is again a stationary
	scroll (or a curve, if $n=2$). So by induction we can write
	\[L_0=\carets{p(b), p'(b),...,p^{(k)}(b), c_1,...,c_{n-2-k}},\]
	for a variable point $p$ and constant points $c_i$ (possibly
	$k=-1$, so the point $p$ is not there and $L_0$ is constant).
	Hence
	\[L=\carets{p(b), p'(b),...,p^{(k+1)}(b), c_1,...,c_{n-2-k}},\]
	or possibly
	\[L=\carets{p(b), p'(b),...,p^{(k)}(b), c_1,...,c_{n-2-k}, c_{n-1-k}}\]
	completing the induction.
	\end{proof}

\begin{rem}
	Curiously, it follows from the above that the linear dual of
	an osculating scroll to a curve is also an 
	osculating scroll to a curve: however this 
	is well known and follows from the existence of a (classical)
	dual (not linear dual) 
	curve $\check X\subset\check\P^N$ which for $X$ nondegenerate
	is just the locus of $(N-1)$st osculating hyperplanes,
	given parametrically by $p\wedge p'\wedge...\wedge p^{(N-1)}$.
	\end{rem}
\begin{rem}
	The 'conjugate spaces' 
	construction in \cite{akivis-gauss}
	shows that for general $g$ analogues of Theorem \ref{lowdim}
	or Proposition \ref{1dim} are not available. 
	However varieties with Gauss dimension $g=2$ are classified by
	Piontkowski \cite{piontkowski}.
	\end{rem}
\begin{rem}\label{linear} Note that for any 
	Gaussian scroll $X/B$, the pullback of $X$ 
by  any curve
 $C\to B$ through a general point of $B$  is a variety $X_C/C$ to which Proposition \ref{1dim} applies.
 Consequently $X$ is in many ways a union of cones over osculating
 varieties to curves. 
 
 \end{rem} 
 \begin{comment}
\vfill\eject
\section*{Declarations}\tt\small
\subsection*{Authorship}
Ziv Ran is the sole author of this work and is solely responsible for its content.
\subsection*{Compliance with ethical standards}

1. The author does not now have and has never before had
 affiliation with any organization with a direct or indirect financial or non-financial
interest in the subject matter discussed in the manuscript. There are no potential conflicts of interest\par
2. No human participants or animals were used.\par
3. The author has never had and does not now have any interests, financial or non-financial,
 directly or indirectly related to this paper.\par
 4. The author has no past, present or anticipated employment by any organization that may 
 gain or lose financially through publication of this manuscript
\subsection*{Funding}
No funding was received for conducting this study.
\subsection*{DATA AVAILABILITY STATEMENT}
There is no data set associated with this paper
\normalsize
\end{comment}
\bibliographystyle{amsplain}

\bibliography{../mybib}
\end{document}